\def\theequation {\thesection.\arabic{equation}}
\makeatletter\@addtoreset {equation}{section}\makeatother
\theoremstyle{plain}
\newtheorem{theorem}{Theorem}[section]
\newtheorem{lemma}{Lemma}[section]
\newtheorem{proposition}{Proposition}[section]
\newtheorem{remark}{Remark}[section]
\def\Re{\operatorname{Re}}
\begin{document}

\def\theequation {\thesection.\arabic{equation}}

\title{On existence of maximal semidefinite invariant subspaces for  $J$-dissipative operators}
\author[Ugra State University (Hanty-Mansiisk), Sobolev Institute of
Mathematics (Novosibirsk)]{S.G. Pyatkov}

\address{%
Ugra State University,\\  Chekhov st. 16, 628012,\\
Hanty-Mansiisk, Russia,\\fax:+7 3467357734,\\
phone: +7 9129010471}

\email{pyatkov@math.nsc.ru, s\_pyatkov@ugrasu.ru}

\subjclass{Primary 47B50; Secondary 46C20; 47D06}

\keywords{dissipative operator, Pontryagin space, Krein space,
invariant subspace, analytic semigroup}

\date{July 23, 2010}

\begin{abstract} {\bf Abstract.}
We describe necessary and sufficient conditions for a
$J$-dissipative operator in a Krein space to have maximal
semidefinite invariant subspaces. The semigroup properties of
the restrictions  of an operator to these subspaces are studied. Applications are given to the case
when an operator admits matrix representation with respect to the
canonical decomposition of the space.
The main conditions are given in the terms of the interpolation theory of Banach spaces.
\end{abstract}

 \maketitle

\markright{Maximal semidefinite invariant suspaces}

\section{Introduction}

In this article we consider the question of existence of invariant
semidefinite invariant subspaces for  $J$-dissipative operators defined in a Krein space.
Recall that a Krein space  (see \cite{azi}) is a Hilbert space  $H$, where
together with the usual inner product  $(\cdot,\cdot)$ an indefinite inner product  (an indefinite metric)
 $[x, y] = (Jx, y)$, with $J=P^+-P^-$ ($P^{\pm}$ are orthoprojections in  $H$, $P^+ + P^-=I$) is introduced (see \cite{azi}).
We put  $H^{\pm}=R(P^{\pm})$. Here and in what follows, the symbol $I$
stands for the identity. The operator  $J$ is called a fundamental symmetry.
The Krein space is called a Pontryagin space if
 ${\rm dim\,} R(P^+)<\infty$ or  ${\rm dim\,} R(P^-)<\infty$ and it is denoted by $\Pi_{\kappa}$, where
 $\kappa=\min({\rm dim\,} R(P^+),{\rm dim\,} R(P^+))$.
A subspace $M$ in $H$  is said to be nonnegative (positive, uniformly positive)
if the inequality $[x, x]\geq 0$ ($[x, x]>0$, $[x, x]\geq \delta\|x\|^2$ $(\delta>0)$) holds
for all $x \in M$. Nonpositive, negative, uniformly negative subspaces in $H$
 are defined in a similar way.
If a nonnegative subspace $M$ admits no nontrivial nonnegative extensions, then it
is called a maximal nonnegative subspace. Maximal nonpositive (positive, negative, nonnegative, etc.) subspaces in $H$
 are defined by analogy.
 A densely defined operator $A$
is said to be dissipative (strictly dissipative, uniformly dissipative) in $H$
 if $-{\rm Re\,}(Ax, x)\geq 0$ for all $x \in D(A)$ ($-{\rm Re\,}(Ax, x)> 0$ for all $x \in D(A)$ or
  $-{\rm Re\,}(Ax, x)\geq \delta\|u\|^2$ ($\delta>0$) for all $x \in D(A)$). Similarly,
a densely defined operator $A$ is called a $J$-dissipative (strictly $J$-dissipative or uniformly $J$-dissipative)
whenever the operator $JA$ is dissipative (strictly dissipative or uniformly dissipative).
A dissipative ($J$-dissipative) operator is said to be maximal dissipative (maximal $J$-dissipative) if it admits no nontrivial
dissipative ($J$-dissipative) extensions. Let $A:H\to H$ be a $J$-dissipative operator.
We say that a subspace $M\subset H$ is invariant under $A$
if $D(A)\cap M$ is dense in $M$ and $Ax\in M$ for all $x\in D(A)\cap M$.

The main question under consideration here is the question  on existence of
semidefinite (i.~e. of a definite sign) invariant subspaces for a given $J$-dissipative operator in a Krein space.

The first results in this direction were obtained in the Pontryagin article  \cite{pont}, where is was proven that
every $J$-selfadjoint operator in a Pontryagin space (let ${\rm dim\,}H^+=\kappa<\infty$))
has maximal nonnegative invariant subspace  $M$ (${\rm
dim\,M}=\kappa$) such that the spectrum of the restriction $L|_{M}$ lies in the closed upper half-plane.

After this paper, the problem on  existence of invariant maximal semidefinite subspaces
turned out to be a focus of an attention in the theory of operators in Pontryagin and Krein spaces.
For example, we can note the articles by Krein M.G. (1950, 1964), Langer H. (1960, 1961, 1971, 1975),
Azizov T.Ya (1971, 1973, 2009), Mennicken R., Shkalikov A.A. (1996, 1999, 2005, 2007), and some others.

The Pontryagin results are generalized for different classes of operators in
 \cite{kre1}-\cite{azi3}. A sufficiently complete bibliography and some results are presented in
 \cite{azi}. Among the recent articles we note the articles
 \cite{azi3}-\cite{shk3}, where the most general results were obtained.
In these articles the whole space $H$ is identified with
the Cartesian product $H^+\times H^-$ ($H^{\pm}=R(P^{\pm})$)
and an operator
$A$  with the matrix operator $A:H^+\times H^-\to H^+\times H^-$ of the form
\begin{equation}\label{eq01}
\begin{array}{c} A=\left(\begin{array}{cc}A_{11} & A_{12}\\ A_{21} & A_{22}\end{array}\right),\
\\ A_{11}=P^+LP^+,\ A_{12}=P^+LP^-,\
A_{21}=P^-LP^+,\ A_{22}=P^-LP^-.\end{array}
\end{equation}
In this case the fundamental symmetry is as follows
$J=\left[\begin{array}{cc}I& 0\\ 0 & -I\end{array}\right]$.
The basic condition of existence of a maximal nonnegative invariant subspace for an operator $L$ in \cite{shk3} is the condition
of compactness of the operator  $A_{12}(A_{22}-\mu)^{-1}$ for some  $\mu$ from the left half-plane.

 For operators of a general form we present necessary and sufficient conditions of existence
of maximal  semidefinite invariant subspaces.
In contrast to the abo\-ve-men\-tio\-ned articles, the main conditions are stated in terms of the interpolation theory of Banach spaces.
Next, we apply the results obtained to the study of the operators represented in  the form
(\ref{eq01}) and weaken in some simple cases  the condition of compactness of the operator  $A_{12}(A_{22}-\mu)^{-1}$ replacing it
with the conditions that  the operators $A_{12}$ and  $A_{21}$ are
subordinate in some sense to the operators  $A_{11}$ and $A_{22}$.
In  the sufficiency part, the results are a generalization of those of the article
\cite{pyat1} (see also \cite{pyat2,pyat3}).

\section{ Preliminaries.}\label{s1}

Given Banach spaces  $X,Y$, the symbol  $L(X,Y)$ denotes the space of linear continuous operators
defined on  $X$ with values in $Y$. If $X=Y$ then we write  $L(X)$ rather than  $L(X,X)$.
We denote by $\sigma(A)$ and $\rho(A)$ the spectrum
and the resolvent set of $A$, respectively. The symbols $D(A)$ and $R(A)$ designate the domain and the range of an operator $A$.
If $M\subset X$ is a subspace then by the restriction of  $L$ to $M$ we mean
the operator  $L|_{M}:M\to X$ with the domain  $D(L|_{M})=D(L)\cap M$ coinciding with  $L$ on $D(L|_{M})$.
 An operator $A$ such that $-A$ is dissipative  (maximal dissipative) is called accretive (maximal accretive). Hence,
taking the sign into account we can say that all statements valid for an accretive operator are true for
a dissipative operator as well. In what follows, we replace the word  "maximal" with the letter $m$ and thus
we write $m$-dissipative rather than maximal dissipative.
If $A$ is an operator in a Krein space  $H$ then we denote by  $A^*$ and $A^c$ the adjoint operators
with respect to an inner product and an indefinite inner product in $H$, respectively.
The latter possesses the usual properties of an adjoint operator (see \cite{azi}).
Let  $A_0$ and $A_1$ be two Banach spaces continuously embedded into
a topological linear space $E$: $A_{0}\subset E$, \
$A_{1}\subset E$. Such a pair $\{A_{0},A_{1}\}$ is called an interpolation couple. Recall the definition
of the interpolation space  $(A_0,A_1)_{\theta,q}$. We describe the $K$-method. For every  $t, 0<t<\infty$, the functional
$$
K(t,a,A_0,A_1)=K(t,a)=\inf_{a=a_0+a_1}(\|a_0\|_{A_0}+t\|a_1\|_{A_1}),
\ a_0\in A_0,\  a_1\in A_1,
$$
defines a norm in the space  $A_0+A_1$ equivalent to its conventional norm.
Let  $0<\theta<1$. For $1\leq q<\infty$,
$$
(A_0,A_1)_{\theta,q}=\{a|a\in A_0+A_1,
\|a\|_{(A_0,A_1)_{\theta,q}}=\Big(\int_{0}^{\infty}[t^{-\theta}K(t,a)]^{q}
\frac{dt}{t}\Big)^{1/q}<\infty\}.
$$

We now present some facts used below.

\begin{proposition}\label{pro1}
Let $H$ be a Hilbert space {\rm (}a Krein space{\rm )}.

{\rm 1.}   A maximal dissipative  {\rm (}$J$-dissipative{\rm )} operator  $A$ is always closed
and if  $A$ is  $m$-dissipative then ${\mathbb C}^+=\{z\in {\mathbb C}: \ \Re z>0\}\subset \rho(A)$
{\rm (}see \cite[Propos. C.7.2]{haase}{\rm )}.

{\rm 2.} If $A$ is  $m$-dissipative then the operators   $A+i\omega I$ {\rm (}$\omega\in {\mathbb R}${\rm )} and
$A-\varepsilon I$ $(\varepsilon>0)$ are $m$-dissipative  as well {\rm (\cite[Propos. C.7.2]{haase})}.

{\rm 3.} If $A$ is $m$-$J$-dissipative then so are the operators
$A+i\omega I$ {\rm (}$\omega\in {\mathbb R}${\rm )}.
{\rm (}This property easily follows from the definition{\rm )}.

{\rm 4.} If $A$ is  $m$-$J$-dissipative and  ${\rm ker\, }A=\{0\}$ then  ${\rm ker\, }A^c=\{0\}$
{\rm (}see \cite[Chap. 2, Sect. 2, Corollary 2.17]{azi},  note that in  \cite{azi}
the authors use a different definition of a $J$-dissipative  operator. In this definition it is required
that  ${\rm Im\,}[Au,u]\geq 0$ for all $u\in D(A)${\rm )}.

{\rm 5.}  If  $A$ is a maximal uniformly dissipative  {\rm (}$J$-disipative{\rm )} operator then
 $i{\mathbb R}\in \rho(A)$ {\rm (}see \cite[Chap. 2, Sect.2, Propos.  2.32]{azi}{\rm )}.

{\rm 6.} If  $A$ is  $m$-dissipative   {\rm (}$m$-$J$-dissipative {\rm )} then the operator  $A$ is injective
if and only if the subspace $R(A)$ is dense in  $H$ {\rm (}see \cite[Propos. 7.0.1]{haase}{\rm )}.

{\rm 7.} If  $A$ is $m$-dissipative   {\rm (}$m$-$J$-dissipative{\rm )} then so is  the operator  $A^*$ $(A^c)$
 {\rm (}\cite[Propos.  C.7.2]{haase}, or   \cite[Chap. 2, Sect. 2, Propos. 2.7]{azi}{\rm )}.
\end{proposition}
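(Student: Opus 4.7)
The seven items are standard facts about $m$-dissipative and $m$-$J$-dissipative operators, each accompanied by an explicit reference in the statement. The plan is to verify them by reducing the Krein-space assertions to Hilbert-space ones through the identities ``$A$ is $J$-dissipative iff $JA$ is dissipative'' and $A^c = JA^*J$, and then invoking the Lumer--Phillips and Hille--Yosida machinery in the Hilbert-space setting.

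Items (1)--(3), (5), and (7) are routine manipulations. Closedness in (1) follows because the closure of a dissipative operator is again dissipative (continuity of the inner product), so maximality forces $A = \bar A$; the inclusion $\mathbb{C}^+ \subset \rho(A)$ comes from $-\Re((A-\lambda)x,x) \geq \Re\lambda\,\|x\|^2$ combined with the range characterization of maximal dissipativity. Items (2) and (3) are immediate since $\Re(i\omega x,x) = 0$ and $-\Re((A-\varepsilon)x,x) = -\Re(Ax,x) + \varepsilon\|x\|^2$, while maximality transfers because the shifts act bijectively on the set of dissipative extensions. For (5), uniform dissipativity strengthens the estimate to $\|(A-\lambda)x\| \geq (\Re\lambda+\delta)\|x\|$, yielding $\{\Re\lambda > -\delta\} \subset \rho(A)$. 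For (7), Lumer--Phillips identifies $m$-dissipative $A$ with the generator of a contraction $C_0$-semigroup $T(t)$; the adjoint $T(t)^*$ is again contractive, so its generator $A^*$ is $m$-dissipative.

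The substantive items are (6) and (4). The heart of (6) is the equality $\ker A = \ker A^*$ for $m$-dissipative $A$ on a Hilbert space. Given $u \in \ker A$ and any $v \in D(A)$, expanding $-\Re(A(u+\varepsilon v), u+\varepsilon v) \geq 0$ and using $Au = 0$ yields the $\varepsilon$-linear inequality $-\Re[\varepsilon(Av,u)] \geq |\varepsilon|^2 \Re(Av,v)$; a suitable choice of the phase of $\varepsilon$ together with passage $\varepsilon \to 0$ forces $(Av,u) = 0$ for every $v \in D(A)$, hence $u \in \ker A^*$. The reverse inclusion follows from the same argument applied to $A^*$, which is $m$-dissipative by (7). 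Since $R(A)$ is dense iff $\ker A^* = \{0\}$, the equivalence in (6) drops out. Item (4) reduces to (6): if $A$ is $m$-$J$-dissipative then $JA$ is $m$-dissipative with $\ker JA = \ker A = \{0\}$, so (6) gives $R(JA) = JR(A)$, and hence $R(A)$, dense; combined with $\ker A^c = J(\ker A^*) = J(R(A)^\perp)$ this produces $\ker A^c = \{0\}$.

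The main obstacle is establishing the identity $\ker A = \ker A^*$ underpinning (6); every other item is either a direct computation or a standard appeal to semigroup theory. A fully written proof in practice consists of reproducing or citing the arguments in Haase \cite{haase} and Azizov--Iokhvidov \cite{azi}, which is precisely what the statement already does.
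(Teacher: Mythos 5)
Your proposal is essentially correct, but it takes a different route from the paper, which offers no proof at all: Proposition 2.1 is a list of known facts whose ``proof'' consists of the references embedded in the statement (Haase \cite{haase} and Azizov--Iokhvidov \cite{azi}). What you do instead is sketch a self-contained derivation, systematically reducing the Krein-space claims to Hilbert-space ones through $A\mapsto JA$ and $A^c=JA^*J$ (note the clean identity $JA^c=(JA)^*$, which makes item 7 immediate from its Hilbert-space version), invoking Lumer--Phillips for item 7, and proving the key identity $\ker A=\ker A^*$ for item 6 by the polarization/phase argument applied to $-\Re(A(u+\varepsilon v),u+\varepsilon v)\ge 0$; items 4 and 6 then fall out exactly as you say (for the reverse inclusion you implicitly use $A^{**}=A$, i.e.\ closedness from item 1 and dense definedness of $A^*$, which is worth saying). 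Two spots are thinner than the rest and deserve care if you write this out. First, closedness in item 1 needs the preliminary fact that a densely defined dissipative operator is closable with dissipative closure, and your appeal to ``the range characterization of maximal dissipativity'' is precisely the nontrivial Phillips-type theorem that the Haase citation carries, so you are invoking rather than proving the core of item 1. Second, in item 5 the one-line estimate argument only works verbatim in the Hilbert-space case; in the Krein case the spectra of $A$ and $JA$ are related only at the point $0$ (invertibility of $A-i\omega$ is invertibility of $JA-i\omega J$, not of $JA-i\omega I$), so you must first shift, using item 3 together with the observation that $\Re\bigl(i\omega[x,x]\bigr)=0$ preserves uniform $J$-dissipativity, and then apply the Hilbert-space result to $J(A-i\omega I)$ at $\lambda=0$. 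With those routine repairs your sketch is a valid substitute for the citations; what it buys is self-containedness and an explicit mechanism for transferring statements between $(\cdot,\cdot)$ and $[\cdot,\cdot]$, at the cost of reproving textbook material the paper deliberately delegates to \cite{haase} and \cite{azi}.
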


Let $H$ be a complex Hilbert space with the norm $\|\cdot\|$ and let $L:H\to H$ be a closed densely defined
operator such that $\rho(L)\neq \emptyset$. Take $\lambda\in \rho(L)$ and endow the space  $H_k=D((L-\lambda I)^k)$ ($k$
is a nonnegative integer) with the norm $\|u\|_{H_k}=\|(L-\lambda I)^ku\|$. If $k<0$
is an integer then we denote by  $H_{k}$  the completion of  $H$ with respect to the norm
 $\|u\|_{H_{k}}=\|(L-\lambda I)^ku\|$.
Using the Hilbert resolvent  identity
\begin{equation}\label{eq11}
(L-\lambda I)^{-1}-(L-\gamma I)^{-1}=(\lambda-\gamma)(L-\lambda I)^{-1}(L-\gamma I)^{-1},
\end{equation}
we can  prove that the norm in  $H_{k}$ is independent of the parameter $\lambda\in \rho(L)$ and thus the definition
 of this space is correct.
Probably for the first time,
these spaces were introduced and studied in the articles by Grisvard (1966).
 At present the spaces $H_k$ for $k<0$ are often called extrapolation spaces and
the totality $\{H_k\}$ of these spaces is called the Sobolev tower  \cite{eng}.
In view of the definition, it is easy to show that the norm in  $H_{-k}$ ($k>0$) coincides with the norm
$$
\|u\|_{H_{-k}}=\sup_{v\in D((L^*)^k)}{|(u,v)|}/{\|v\|_{D(((L^*-\overline{\lambda} I)^k)^k)}}.
$$
The interpolation spaces $(H_{m},H_n)_{\theta,p}$ are described in
 \cite{gri1} under the additional condition that the operator  $L$ is positive, i.~e.,
$\{x\in {\mathbb R}:x\leq 0\}\subset \rho(L)$ and the inequality
\begin{equation}\label{eq111}
\|(L-\lambda I)^{-1}\|\leq c/(1+|\lambda|)\ \ \forall \lambda\leq 0
\end{equation}
holds. Equivalent norms in these spaces can be found, for instance, in
  \cite[Sect. 1.14.3]{tr01}.
Other classes of spaces  constructed with the use of a sectorial operator $L$
 are  described in \cite{haase} (see also the bibliography in  \cite{haase} and \cite{ausher}).
Their interpolation properties and, in particular, one more class of equivalent norms are also presented there.
Recall that  $L:H\to H$ is a sectorial operator if there exists $\theta\in [0,\pi)$ such that
$\sigma(L)\subset \overline{S_{\theta}}$ ($S_{\theta}=\{z: |{\rm arg\,}z|<\theta\}$),
${\mathbb C}\setminus \overline{S_{\theta}}\subset \rho(L)$, and, for every
 $\omega>\theta$, there exists a constant  $c(\omega)$ such that
\begin{equation}\label{eq112}
\|(L-\lambda I)^{-1}\|\leq c/|\lambda|\ \ \forall \lambda \in {\mathbb C}\setminus S_{\omega}.
\end{equation}
Let  $L$ be sectorial and injective (we do not require that $0\in \rho(L)$).
An analog of the space  $H_1$ in this case is the space
 $D_L$ which is a completion of  $D(L)$ with respect to the norm
 $\|Lu\|=\|u\|_{D_L}$ and an analog of  $H_{-1}$ is a completion of  $R(L)$ with respect to the norm $\|L^{-1}u\|=\|u\|_{R_L}$.
If $0\in \rho(L)$ then  $H_1=D_L $ and $H_{-1}=R_L$, otherwise these equalities are not true.

Let  $L:H\to H$ be an m-$J$-dissipative operator in the Krein space  $H$ with the indefinite inner product
$[\cdot,\cdot]=(J\cdot,\cdot)$, where $J$ is the fundamental symmetry and the symbol $(\cdot,\cdot)$ designates
the inner product  in $H$.
Define the space $F_1$ as the completion of   $D(L)$ with respect to the norm
$$
\|u\|_{F_1}^2=-{\rm Re\,}[Lu,u]+\|u\|^2, \ \ \|u\|=\|u\|_{H},
$$
and the space  $F_{-1}$ as the completion of  $H$ with respect to the norm
$$
\|u\|_{F_{-1}}=\sup_{v\in F_1}|[u,v]|/\|v\|_{F_1}.
$$
The space $F_{1}$ can be identified with a dense subspace of $H$ whenever
\begin{equation}\label{eq12}
\exists c>0 :\  |[Lu,v]|\leq c \|u\|_{F_1}\|v\|_{F_1}\ \ \forall u,v\in D(L).
\end{equation}
The proof can be found in Sect. 4, Chap. 1 in \cite{pyat2}.
It  also follows from  the arguments of
Sect.  7.3.2 in \cite{haase} (for example, from Propos.  7.3.4,
where it suffices to take  $A=-JL+I$).

It is a more or less obvious fact that
the condition (\ref{eq12}) is equivalent to the condition
\begin{equation}\label{eq13}
\exists c>0: \  |{\rm Im\,} [Lu,u]|\leq c\|u\|_{F_1}^2\ \  \forall u\in D(L).
\end{equation}
It suffices to take a sesquilinear form $a(u,v)=-[Lu,v]+(u,v)$ and apply Propos.  C.1.3 in \cite{haase}.

\begin{proposition}\label{pro2}
Let $L$ be an $m$-dissipative operator. Then  $-L$ is sectorial with $\theta=\pi/2$ and
\begin{equation}\label{eq122}
(H_1,H_{-1})_{1/2,2}=H.
\end{equation}
If additionally it is injective then
\begin{equation}\label{eq121}
(D_L,R_L)_{1/2,2}=H.
\end{equation}
\end{proposition}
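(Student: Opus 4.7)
The plan splits into the sectoriality and the interpolation identity. For the sectoriality of $-L$ with angle $\pi/2$, the plan is to use Proposition~\ref{pro1}(1) to obtain $\mathbb{C}^+\subset\rho(L)$, whence $\sigma(-L)\subset\overline{S_{\pi/2}}$, and then derive the resolvent bound (\ref{eq112}) directly from $m$-dissipativity via Cauchy--Schwarz: for $\Re\lambda>0$ and $u\in D(L)$,
$$
\|(\lambda I-L)u\|\,\|u\|\geq\Re\langle(\lambda I-L)u,u\rangle=\Re\lambda\,\|u\|^2-\Re\langle Lu,u\rangle\geq\Re\lambda\,\|u\|^2,
$$
so $\|(L-\lambda I)^{-1}\|\leq 1/\Re\lambda$. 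Parametrising $\mu=-\lambda\notin S_\omega$ with $\omega>\pi/2$ and using $|\Re\mu|\geq|\mu|\,|\cos\omega|$ then yields (\ref{eq112}) with $c(\omega)=|\cos\omega|^{-1}$.

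For (\ref{eq122}), I would fix $\lambda=1\in\rho(L)$ and set $T=L-I$, so that $\|\cdot\|_{H_1}=\|T\cdot\|$, $\|\cdot\|_{H_{-1}}=\|T^{-1}\cdot\|$, and the scale $\{H_k\}$ coincides with the integer-power domains of $T$. The operator $A=-T=I-L$ is then uniformly $m$-accretive with $0\in\rho(A)$. Next, I would invoke the Grisvard--Triebel description of real interpolation between fractional-power domains of a positive sectorial operator on a Hilbert space (cf.\ \cite{gri1,tr01,haase}): the reiteration identity
$$
(D(A^s),D(A^t))_{\theta,2}=D(A^{(1-\theta)s+\theta t})
$$
applied with $s=1$, $t=-1$, $\theta=1/2$ collapses the exponent to $0$, giving $(H_1,H_{-1})_{1/2,2}=D(A^0)=H$ with equivalent norms. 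The underlying hypothesis---boundedness of the imaginary powers, equivalently a bounded $H^\infty$-calculus of angle $\pi/2$---is satisfied on a Hilbert space by every $m$-accretive operator (see \cite{haase}). For the injective case (\ref{eq121}), the same reasoning applies with $L$ itself in place of $A$: by Step~1 and Proposition~\ref{pro1}(6) the homogeneous spaces $D_L$, $R_L$ are the first positive and negative power domains of the sectorial injective operator $L$, and the reiteration yields $(D_L,R_L)_{1/2,2}=D(L^0)=H$.

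The principal obstacle is the interpolation identity, which rests essentially on the Hilbert-space structure: it requires the bounded $H^\infty$-calculus of $m$-accretive operators, a property that fails in arbitrary Banach spaces. A self-contained alternative would be to prove the quadratic estimate
$$
\|u\|^2\asymp\int_0^\infty\|sA(I+sA)^{-2}u\|^2\,\frac{ds}{s}
$$
on the Hilbert space $H$ (valid e.g.\ via Plancherel applied to the holomorphic functional calculus) and convert it into the required $K$-functional bound for $(H_1,H_{-1})$ through the Calder\'on reproducing formula.
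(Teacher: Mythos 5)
Your argument is correct and in substance the same as the paper's: the paper cites Haase (Sect.\ 7.1.1) for the sectoriality and Auscher--McIntosh--Nahmod (Theorems 2.2 and 4.2) together with Haase (Theorem 7.3.1, eq.\ (7.18)) for the interpolation identities, and those results rest on precisely the ingredient you isolate, namely the bounded $H^\infty$-calculus (equivalently quadratic estimates/BIP) of $m$-accretive operators on a Hilbert space, combined with interpolation of the power scale. The only real difference is cosmetic ordering: you prove (\ref{eq122}) directly for the shifted invertible operator $I-L$ and then handle the injective homogeneous case, whereas the paper first obtains (\ref{eq121}) from the cited results and then deduces (\ref{eq122}) by applying it to $L-\varepsilon I$.
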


The sectoriality with  $\theta=\pi/2$ results from  \cite[Sect. 7.1.1]{haase}).
The equality (\ref{eq121}) follows from Theorems  2.2 and 4.2 in \cite{ausher}
and the arguments after Theorem 2.2 (see also Sect. 7.3.1, Theorem 7.3.1, and the equality  (7.18) in
\cite{haase}). Take the operator $L-\varepsilon I$ ($\varepsilon>0$) instead of  $L$. This operator
is  $m$-dissipative and
$0\in \rho(-L+\varepsilon I)$. Hence, the equality (\ref{eq122}) results from  (\ref{eq121}).

Given a pair $H_1,H$ of Hilbert spaces and $H_1$ is densely embedded into $H$, by the negative space $H_1'$ constructed on this pair
we mean the completion of $H$ with respect to the norm
$$
\|u\|_{H_1'}=\sup_{v\in H_1}|(u,v)|/\|v\|_{H_1},
$$
where the brackets $(\cdot,\cdot)$ denote the inner product in $H$.
In this case the following known assertion holds (see  \cite[Chap. 1]{bere} and  \cite[the equality (2.2)]{gri2}).

\begin{proposition}\label{pro3}
The space of antilinear continuous functions over  $H_1$ can be identified with
 $H_{1}'$, the norm in   $H_1$ is equivalent to the norm
$\sup_{v\in H_{1}'}|(v,u)|/\|v\|_{H_{1}'}$, and
\begin{equation}\label{eq125}
(H_1,H_{1}')_{1/2,2}=H.
\end{equation}
\end{proposition}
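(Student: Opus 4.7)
My plan is to split the three claims: the identification of $H_1'$ with the antidual of $H_1$ and the reflexivity statement follow from the Riesz representation theorem on the Hilbert space $H_1$, while the interpolation identity reduces to Proposition~\ref{pro2} by constructing the positive selfadjoint operator canonically associated with the embedding $H_1\hookrightarrow H$.

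I would first handle (1). Define $j:H\to H_1^{*}$ (antidual of $H_1$) by $j(u)(v)=(u,v)$. The relation $\|j(u)\|_{H_1^{*}}=\sup_{v\in H_1}|(u,v)|/\|v\|_{H_1}=\|u\|_{H_1'}$ shows that $j$ is isometric. To prove $j(H)$ dense in $H_1^{*}$, observe that any element of $H_1^{**}=H_1$ annihilating $j(H)$ corresponds under Riesz to some $w_0\in H_1$ with $(u,w_0)=0$ for every $u\in H$, forcing $w_0=0$. Hence the completion of $H$ under $\|\cdot\|_{H_1'}$ is isometrically isomorphic to $H_1^{*}$, which is (1). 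Claim (2) is then the Hilbert reflexivity $(H_1')^{*}=H_1$ re-expressed through the $H$-pairing.

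For (3), let $A$ be the positive selfadjoint operator in $H$ associated (via the form representation theorem) with the sesquilinear form $(u,v)_{H_1}$ on $H_1$; thus $(Au,v)_H=(u,v)_{H_1}$ on a dense domain, $D(A^{1/2})=H_1$, and $\|A^{1/2}u\|_H=\|u\|_{H_1}$. Set $T=A^{1/2}$: this is positive selfadjoint and injective, so $-T$ is $m$-dissipative in the sense of Section~\ref{s1}. The identification $D_T=H_1$ is then immediate from $\|Tu\|=\|u\|_{H_1}$, and selfadjointness of $T$ gives, for $u\in R(T)$,
\[
\|u\|_{H_1'}=\sup_{v\in D(T)}\frac{|(u,v)|}{\|Tv\|}=\sup_{w\in R(T)}\frac{|(T^{-1}u,w)|}{\|w\|}=\|T^{-1}u\|_H=\|u\|_{R_T},
\]
where we substituted $v=T^{-1}w$ and used density of $R(T)$ in $H$. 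Passing to completions, $R_T=H_1'$ as Banach spaces, and Proposition~\ref{pro2} applied to $-T$ yields $(H_1,H_1')_{1/2,2}=(D_T,R_T)_{1/2,2}=H$.

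The main obstacle is the compatibility of the two completion procedures, namely showing that $R_T$ and $H_1'$, although defined as completions of different dense subspaces of $H$ under a priori different recipes, carry the same norm; this is what the selfadjoint calculus for $T$ ultimately guarantees. Once this is established, (3) follows as an immediate corollary of Proposition~\ref{pro2}.
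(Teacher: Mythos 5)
Your argument is correct, but it is worth noting that the paper does not actually prove Proposition~\ref{pro3}: it is quoted as a known fact about rigged Hilbert spaces, with references to Berezanskii (Chap.~1) and to Grisvard's equality (2.2). So your proposal supplies a self-contained derivation where the paper relies on citation. Your route --- take the positive selfadjoint operator $A$ representing the form $(u,v)_{H_1}$, set $T=A^{1/2}$, check $D_T=H_1$ and, via selfadjointness of $T^{-1}$, that the $R_T$-norm coincides with the negative norm, then invoke Proposition~\ref{pro2} for the injective $m$-dissipative operator $-T$ --- is sound: the form is densely defined, symmetric, nonnegative and closed (continuity of the embedding $H_1\hookrightarrow H$ gives closedness and in fact $A\geq c>0$, so $0\in\rho(T)$ and the completion-compatibility you flag is immediate, since $\|\cdot\|_{H_1'}\leq C\|\cdot\|_H$ makes $R(T)=H$ automatically dense in $H_1'$). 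The identification of $H_1'$ with the antidual and the dual-norm formula for $\|u\|_{H_1}$ via Hahn--Banach/reflexivity are also fine, and you even get equality rather than mere equivalence of norms. The only methodological remark is that the classical proofs you are replacing (Berezanskii, Grisvard) obtain \eqref{eq125} directly from the spectral theorem for this same canonical operator $T$, which is lighter than routing through the sectorial-operator interpolation result behind Proposition~\ref{pro2}; on the other hand, your reduction keeps the whole argument inside statements already available in the paper, which is a reasonable trade-off.
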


\begin{lemma}\label{lem1}
{\it Let  $L:H\to H$ be an $m$-$J$-dissipative operator satisfying the condition {\rm (\ref{eq12})}.
Then
\begin{equation}\label{eq14}
(H_1,H_{-1})_{1/2,2}=H
\end{equation}
if and only if
\begin{equation}\label{eq15}
(F_1,F_{-1})_{1/2,2}=H.
\end{equation}}
\end{lemma}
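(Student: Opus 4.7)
My plan is to reduce both interpolation identities to a single unconditional one for an auxiliary $m$-accretive operator, and then bridge between them using the form structure provided by (\ref{eq12}). I set $\hat L := I - JL$. Since $JL$ is $m$-dissipative, item 2 of Proposition \ref{pro1} gives that $-\hat L$ is $m$-dissipative, and the computation $\Re(\hat L u, u) = \|u\|^2 - \Re[Lu, u] \geq \|u\|^2$ shows $\hat L$ is injective with $0 \in \rho(\hat L)$. Proposition \ref{pro2} applied to $-\hat L$ then delivers the unconditional identity $(D_{\hat L}, R_{\hat L})_{1/2,2} = H$, and a short calculation using $J^2 = I$ and $\|J\|=1$ shows that $\|\hat L u\| = \|Lu - Ju\|$ is equivalent to the graph norm $\|(L-\lambda)u\|$ for any $\lambda \in \rho(L)$, so $D_{\hat L} = H_1$ with equivalent norms.

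Next I exploit (\ref{eq12}): the form $a(u,v) = -[Lu,v] + (u,v) = [(-L+J)u, v]$ extends boundedly to $F_1 \times F_1$, with $\Re a(u,u) = \|u\|_{F_1}^2$, and Lax--Milgram yields an isomorphism $\mathcal{A}: F_1 \to F_{-1}$ extending $\hat L$, identifying $F_{-1}$ as the ``indefinite dual'' of $F_1$. Proposition \ref{pro3} applied to the ordinary pair $(F_1, H)$ delivers $(F_1, F_1')_{1/2,2} = H$ for the ordinary negative space $F_1'$, and the identity $[u,v] = (Ju,v)$ combined with the $H$-isometry of $J$ shows that $u \mapsto Ju$ is an isometric isomorphism $F_{-1} \to F_1'$ as completions of $H$.

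Both (\ref{eq14}) and (\ref{eq15}) assert that the midpoint at $\theta = 1/2$ of a balanced couple is $H$. With the continuous embeddings $H_1 \hookrightarrow F_1 \hookrightarrow H \hookrightarrow F_{-1} \hookrightarrow H_{-1}$ in place, and the two unconditional identities $(D_{\hat L}, R_{\hat L})_{1/2,2}=H$ and $(F_1, F_1')_{1/2,2} = H$ in hand, a reiteration/monotonicity argument should reduce both (\ref{eq14}) and (\ref{eq15}) to the same statement, namely that $R_{\hat L}$ coincides with $H_{-1}$ (equivalently with $F_{-1}$) as completions of $H$ with equivalent norms. I expect the main obstacle to be that the isometric isomorphism $J : F_{-1} \to F_1'$ is \emph{not} compatible with the common space $F_1$: one computes $\|Ju\|_{F_1}^2 = \|u\|^2 - \Re(LJu, u)$, which differs from $\|u\|_{F_1}^2$ in general, so one cannot transport $(F_1, F_1')_{1/2,2} = H$ to $(F_1, F_{-1})_{1/2,2} = H$ by interpolation functoriality alone. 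Bypassing this asymmetry through the chain of embeddings, together with the full force of condition (\ref{eq12}), is where the real work of closing the equivalence will lie.
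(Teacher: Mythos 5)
Your preparatory steps are sound and in fact parallel the paper's: your $\hat L=I-JL$ is just $J(J-L)=-JL_0$ for the paper's auxiliary operator $L_0=L-J$, the identity $(F_1,F_1')_{1/2,2}=H$ is Proposition~\ref{pro3}, $(D_{\hat L},R_{\hat L})_{1/2,2}=H$ is Proposition~\ref{pro2}, $D_{\hat L}=H_1$ with equivalent norms is correct, and the isometry $J:F_{-1}\to F_1'$ and the chain $H_1\hookrightarrow F_1\hookrightarrow H\hookrightarrow F_{-1}\hookrightarrow H_{-1}$ are fine. But the proof stops exactly where the lemma's content begins: you say a reiteration/monotonicity argument ``should'' reduce both \eqref{eq14} and \eqref{eq15} to a common statement and concede that bypassing the $J$-asymmetry ``is where the real work of closing the equivalence will lie.'' That work is absent, and the proposed common target is not even the right one. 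Note that $R_{\hat L}$ is the completion of $H$ in the norm $\|(I-JL)^{-1}u\|=\|(L-J)^{-1}Ju\|$, while $H_{-1}$ carries the equivalent norm $\|(L-J)^{-1}u\|$ (this equivalence is proved in the paper via duality with $D(L^*)$); if these completions coincided, your unconditional identity $(D_{\hat L},R_{\hat L})_{1/2,2}=H$ with $D_{\hat L}=H_1$ would give \eqref{eq14} for \emph{every} $m$-$J$-dissipative $L$ satisfying \eqref{eq12}, which is false -- \eqref{eq14} is exactly the restrictive condition the rest of the paper is about (Theorem~\ref{th6} shows it is equivalent to the existence of the invariant subspaces, which can fail). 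Likewise $H_{-1}$ and $F_{-1}$ are genuinely different completions in general, so ``$R_{\hat L}$ coincides with $H_{-1}$ (equivalently with $F_{-1}$)'' cannot serve as the reduction of either identity. The entire content of the lemma is precisely the $J$-twist you flagged: \eqref{eq14} and \eqref{eq15} each assert that this twist is harmless at their respective scales ($H_{\pm1}$ versus $F_{\pm1}$), and no unconditional interpolation identity can substitute for proving that these two assertions are equivalent.

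For comparison, the paper's own proof normalizes to $L_0=L-J$, which is maximal uniformly $J$-dissipative with $i\mathbb R\subset\rho(L_0)$, the same domain, and $-\Re[L_0u,u]=\|u\|_{F_1}^2$, verifies that $\|(L-J)^{-1}u\|$ is an equivalent norm on $H_{-1}$ (so the spaces $H_{\pm1},F_{\pm1}$ built from $L_0$ agree with those built from $L$), and then invokes Lemma~4.2 of Chap.~1 in \cite{pyat2} (or Lemma~4.6 in \cite{pyat3}), which contains, for uniformly dissipative variational operators, exactly the two-sided implication you would still need to establish. So your outline reproduces the paper's setup but leaves open the step that the paper settles by citation; to have a complete argument you would either have to cite such a result yourself or prove the equivalence directly, e.g.\ by relating $F_1$ and $F_{-1}$ to interpolation spaces of the couple $(H_1,H_{-1})$ for the coercive operator $L_0$ and arguing in both directions.
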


\begin{proof}
Since $L$ is  $m$-$J$-dissipative, the operator $JL$ is   $m$-dissipative in  $H$.
In this case the operator  $JL-\varepsilon I$  is $m$-dissipative (the property  2 of Proposition \ref{pro1})
for every  $\varepsilon\geq 0$. Thus, the operator  $L-\varepsilon J$ and  the operator
$L_0=L-J$ as well is  $m$-$J$-dissipative and   $D(L_0)= D(L)$.
Moreover, the operator
$L_0$ is uniformly  $J$-dissipative and
\begin{equation}\label{eq16}
-{\rm  Re\, }[L_0u,u]= \|u\|_{F_1}^2\ \ \forall u\in D(L).
\end{equation}
In this case the property  5 of Proposition \ref{pro1} yields $i{\mathbb R}\in \rho(L_0)$.
Write out  the norm in  $H_{-1}$ for $u\in H$. We have that
$$
\|u\|_{H_{-1}}=\sup_{v\in D(L^*)}\frac{|(u,v)|}{\|v\|_{D(L^*)}}=
\sup_{v\in D(L^*)}\frac{|((L-J)^{-1}u,(L^*-J)v)|}{\|v\|_{D(L^*)}}.
$$
Using the fact that the norms  $\|(L^*-J)v\|$ and  $\|v\|_{D(L^*)}$ are equivalent and the previous equality, we obtain
the estimate  $\|u\|_{H_{-1}}\leq c\|(L-J)^{-1}u\|\leq c_1 \|u\|_{H_{-1}}$ valid for all $u\in H$ and some constants
 $c,c_1$.
Thus, we can introduce  the equivalent norm  $\|(L-J)^{-1}u\|$ in   $H_{-1}$.
The operator  $L_0$ satisfies the conditions of Lemma 4.2 of Chap.1 in \cite{pyat2}  (or Lemma  4.6 in \cite{pyat3}),
which implies the claim.
\end{proof}

\begin{lemma}\label{lem2}
{ Let  $L:H\to H$ be an $m$-$J$-dissipative operator satisfying the condition
 {\rm (\ref{eq12})}. Then  $L$ is extensible to an operator  $\tilde{L}$ of the class $L(F_1,F_{-1})$ and
\begin{equation}\label{eq22}
D(L)=\{u\in F_1: \tilde{L}u\in H\}.
\end{equation}
The operator   $L^c$  $(the J$-adjoint operator to $L)$
 is an $m$-$J$-dissipative operator satisfying
 {\rm (\ref{eq12})} and   $D(L^c)\subset F_1$ and this embedding is dense.}
\end{lemma}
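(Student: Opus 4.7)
My plan is to proceed in three stages: first construct the extension $\tilde L\in L(F_1,F_{-1})$, then identify its $H$-restriction with $L$ using the maximality of $L$, and finally treat $L^c$ via the adjoint form.

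For the extension, observe that by (\ref{eq12}) the sesquilinear form $b(u,v):=[Lu,v]$ on $D(L)\times D(L)$ is continuous in the $F_1\times F_1$ topology. Since $D(L)$ is dense in $F_1$ by definition, $b$ extends uniquely to a continuous sesquilinear form on $F_1\times F_1$. For each $u\in F_1$, the antilinear functional $v\mapsto b(u,v)$ determines, via the pairing $[\cdot,\cdot]$ that was used to define $F_{-1}$, an element $\tilde L u\in F_{-1}$ satisfying $\|\tilde L u\|_{F_{-1}}\le c\|u\|_{F_1}$. Thus $\tilde L\in L(F_1,F_{-1})$ extends $L$.

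To prove (\ref{eq22}), put $\hat L u:=\tilde L u$ with domain $D(\hat L):=\{u\in F_1:\tilde L u\in H\}$. Plainly $\hat L\supset L$, and for $u\in D(\hat L)$ I would approximate by $u_n\in D(L)$ in the $F_1$ norm and pass to the limit in $-{\rm Re\,}[L u_n,u_n]\ge 0$; continuity of the $F_{-1}\times F_1$ pairing gives $-{\rm Re\,}[\tilde L u,u]\ge 0$, and since $\tilde L u\in H$ the extended pairing reduces to the indefinite inner product on $H$. Thus $\hat L$ is a densely defined $J$-dissipative extension of $L$, and the maximality of $L$ forces $\hat L=L$.

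For the assertions on $L^c$, the identity $[Lu,v]=[u,L^c v]$ rewritten as $(JLu,v)=(u,JL^c v)$ yields $JL^c\subset(JL)^*$, and the reverse inclusion is a direct computation from the definition of $L^c$; hence $JL^c=(JL)^*$, and item 7 of Proposition \ref{pro1} gives its $m$-dissipativity, so $L^c$ is $m$-$J$-dissipative. To obtain (\ref{eq12}) for $L^c$ together with the density $D(L^c)\subset F_1$, I would invoke the form-theoretic picture: the form $a(u,v):=-[\tilde L u,v]+(u,v)$ is bounded on $F_1\times F_1$ and coercive, with ${\rm Re\,}a(u,u)=\|u\|_{F_1}^2$, so by the first representation theorem it is associated with a unique $m$-accretive operator on $H$; the computation in the previous step identifies this operator with $I-JL$ of domain $D(L)$. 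The adjoint form $a^*(u,v):=\overline{a(v,u)}$ has the same form domain $F_1$ and the same coercivity constant, and it generates $(I-JL)^*=I-JL^c$. The general fact that the domain of the operator associated with a densely defined closed sectorial form is dense in the form domain then delivers $D(L^c)\subset F_1$ densely, while the continuity of $a^*$ on $F_1\times F_1$ is precisely (\ref{eq12}) for $L^c$.

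The chief technical point will be the density of $D(L^c)$ in $F_1$: a direct Yosida-type approximation using the resolvent of $L^c$ is awkward because the resolvent need not be $F_1$-bounded with the right control, so the cleanest route is to realise $I-JL^c$ as the operator canonically associated with the adjoint of the coercive form defining $L$ and then appeal to the standard density of the operator domain in the form domain.
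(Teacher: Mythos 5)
Your construction of $\tilde L$ and your proof of (\ref{eq22}) are essentially the paper's argument: the bound (\ref{eq12}) gives $\|Lu\|_{F_{-1}}\le c\|u\|_{F_1}$, density of $D(L)$ in $F_1$ gives the extension, and the operator $\tilde L$ restricted to $D_0=\{u\in F_1:\tilde Lu\in H\}$ is a $J$-dissipative extension of $L$, so maximality forces $D_0=D(L)$. Where you genuinely diverge is the treatment of $L^c$. The paper works with $L_0=L-J$, shows via Lax--Milgram that $\tilde L_0$ is an isomorphism of $F_1$ onto $F_{-1}$, and then exploits the identity $[L_0^{-1}u,v]=[u,(L_0^c)^{-1}v]$ together with the duality description of $F_{\pm1}$ (Proposition \ref{pro3}) to conclude that $(L_0^c)^{-1}H\subset F_1$ and that this image is dense, the density coming from injectivity of $L_0^{-1}$ on $F_{-1}$. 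You instead realise $I-JL$ as the $m$-sectorial operator associated with the closed sectorial form $a(u,v)=-[\tilde Lu,v]+(u,v)$ on $F_1$ (the identification with domain exactly $D(L)$ follows either from (\ref{eq22}) or from maximal accretivity, as you indicate), note $JL^c=(JL)^*$, and invoke the standard facts that the adjoint form $a^*$ is associated with $T^*=I-JL^c$ and that the domain of the associated operator is a core of the form; this yields $D(L^c)\subset F_1$ densely. Both routes are variational and correct; yours has the advantage of delivering (\ref{eq12}) for $L^c$ explicitly, via $\Re a^*(u,u)=\Re a(u,u)=\|u\|_{F_1}^2$ on $D(L^c)$, a point you should state since it identifies the $F_1$-space built from $L^c$ with the one built from $L$ (the paper's proof leaves this implicit), while the paper's route stays entirely inside the $F_1,F_{-1}$ duality machinery it has already set up and avoids appealing to the representation theorems for sectorial forms. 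Do make explicit, as the paper does before Proposition \ref{pro2}, that (\ref{eq12}) is what allows $F_1$ to be identified with a dense subspace of $H$, since the closedness of your form rests on this embedding.
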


 \begin{proof} The condition  (\ref{eq12}) ensures the estimate
\begin{equation}\label{eq21}
\|Lu\|_{F_{-1}}\leq c\|u\|_{F_1}.
\end{equation}
This estimate and the density of  $D(L)$ in  $F_1$ imply that the operator
 $L$ is extensible to an operator   $\tilde{L}\in L(F_1,F_{-1})$ and we have
the inequality  $-{\rm Re \,}[\tilde{L}u,u]\geq 0$ for all  $u\in F_1$. Now the equality
(\ref{eq22}) follows from the maximal  $J$-dissipativity of $L$.
Indeed, let  $D_0=\{u\in F_1: \tilde{L}u\in H\}$. Obviously,
 $D(L)\subset D_0$. The operator  $\tilde{L}:H\to H$ with the domain  $D_0$ is $J$-dissipative and an extension of $L$.
 In view of the maximal
 $J$-dissipativity of  $L$ we conclude that  $\tilde{L}=L$ and thereby  (\ref{eq22}) holds.
Consider the operator   $L_0=L-J$. As it was noted in the previous lemma,
the operator $L_0$ is maximal uniformly  $J$-dissipative with $D(L_0)=D(L)$. By Proposition 1 (the property 5),
${i{\mathbb R}}\in \rho(L_0)$.
Demonstrate that  $\tilde{L}_0=\tilde{L}-J$ is an isomorphism of $F_1$ onto  $F_{-1}$.
Indeed, if  $f\in F_{-1}$ then the expression  $[f,v]$ is an antilinear continuous functional
over $F_1$ and the Lax-Milgram theorem  (see, for instance,  \cite[Theorem  C.5.3]{haase}) implies that
there exists $u\in F_1$ such that
$
[\tilde{L}_0u,v]=[f,v]
$
for all  $v\in F_{1}$.
 Below, we use the same symbol $L_0$ for the operators  $L_0$ and $\tilde{L}_0$.
As is known (see  \cite[Chapt. 2, Sect. 1]{azi}, $(L_0^{-1})^c=(L_0^c)^{-1}$.
Note that the negative space  $F_1'$ constructed on the pair $F_1$ and $H$
is a completion of
$H$ with respect to the norm  $\|Ju\|_{F_{-1}}$ and, hence,  $J$ is an isomorphism of  $F_{-1}$ onto $F_1'$ with $J^{-1}=J$.
Proposition  3 implies that
every antilinear continuous functional over
 $F_1$ can be given in the form  $l(v)=[f,v]$, $f\in F_{-1}$, and every linear continuous functional
 over $F_{-1}$ in the form  $l(f)=[f,v]$, $v\in F_1$. Furthermore, the norm in $F_1$ is equiavlent to the norm
$
\|v\|_{F_1}=\sup_{f\in F_{-1}}|[f,v]|/\|f\|_{F_{-1}}.
$
We have that
\begin{equation}\label{eq211}
[L_0^{-1}u,v]=[u,(L_0^c)^{-1}v], \ \  \forall u,v\in H.
\end{equation}
Since  $L_0$ is an isomorphism of  $F_1$ and $F_{-1}$, the above equality
yields  $(L_0^c)^{-1}v\in F_{1}$. Indeed,
the left-hand side in (\ref{eq211}) is a linear continuous functional  $l(u)$ over $F_{-1}$. Hence, there exists
an element  $g\in F_1$ such that
\begin{equation}
[L_0^{-1}u,v]=[u,g], \ \  \forall u\in F_{-1}.
\end{equation}
In view of  (\ref{eq211}) we infer  $[u,g]=[u,(L_0^c)^{-1}v]$ for all  $u\in H$.
Therefore,  $g=(L_0^c)^{-1}v\in F_1$. Since every element
in  $D(L^c)$ is representable in the form  $(L_0^c)^{-1}v$, $v\in H$, we have that
$D(L_0^c)\subset F_1$ and the equality  (\ref{eq211}) implies that
$D(L_0^c)$ is dense in  $F_1$. Otherwise, there exists  $u\in F_{-1}$ such  that the right-hand side in (\ref{eq211})
vanishes for all  $v\in H$. In this case so does the left-hand side and thus $L_0^{-1}u=0$, i.~e.,  $u=0$.
\end{proof}

\begin{lemma}\label{lem3}
 { Assume that $L:H\to H$ is an  $m$-$J$-dissipative operator,
the condition  {\rm (\ref{eq12})} holds, and there exists a constant  $m>0$ such that
\begin{equation}\label{eq18}
\|u\|^2\leq m(-\Re[Lu,u]+\|u\|_{F_{-1}}^2)\ \ \forall  u\in D(L).
\end{equation}
Then there exists a number $\omega_0\geq 0$ such that

{\rm a)} $I_{\omega_0}=\{i\omega:\ |\omega|\geq \omega_0\}\subset \rho(L)\}$;

{\rm b)} if $i\omega\in \rho(L)$ $(\omega\in {\mathbb R})$ then the operator  $L-i\omega I:F_1\to F_{-1}$ is
continuously invertible;

{\rm c)} there exists a constant   $c>0$ such that
\begin{equation}\label{eq19}
\|(L-i\omega I)^{-1}u\|_{F_{-1}}\leq c\|u\|_{F_{-1}}/(1+|\omega|)\ \ \forall \omega\in  I_{\omega_0},\ \forall u\in F_{-1};
\end{equation}

{\rm d)} if   $i{\mathbb  R}\subset \rho(L)$ then there exists a constant
 $c>0$ such that the estimate \eqref{eq19} is true for all  $\omega\in {\mathbb R}$;

{\rm e)} if one of the conditions  {\rm  (\ref{eq14}) and (\ref{eq15})} holds then there exist a constant
 $c>0$ such that
\begin{equation}\label{eq20}
\|(L-i\omega I)^{-1}u\|\leq c\|u\|/(1+|\omega|)\ \ \forall \omega\in I_{\omega_0},\ \ \forall u\in H,
\end{equation}
and if in addition  $i{\mathbb  R}\subset \rho(L)$ then
\begin{equation}\label{eq200}
\exists c>0:\ \  \|(L-i\omega I)^{-1}u\|\leq c\|u\|/(1+|\omega|)\ \ \forall
\omega\in {\mathbb R},\ \ \forall u\in H.
\end{equation} }
\end{lemma}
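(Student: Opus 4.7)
The plan is to prove a two-sided a priori estimate in $F_1$ and $F_{-1}$, derive (a) and (c) from it, bootstrap to (b) and (d) via the resolvent identity, and obtain (e) by dualizing to $L^c$ and interpolating. For $u\in D(L)$ and $f=(L-i\omega I)u$, since $[u,u]\in\RR$ we have $\Re[f,u]=\Re[Lu,u]$, so $-\Re[Lu,u]\leq\|f\|_{F_{-1}}\|u\|_{F_1}$. Inserting this into $\|u\|_{F_1}^2=-\Re[Lu,u]+\|u\|^2$, bounding $\|u\|^2$ by~(\ref{eq18}), and applying Young's inequality yields $\|u\|_{F_1}^2\leq(1+m)^2\|f\|_{F_{-1}}^2+2m\|u\|_{F_{-1}}^2$. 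Lemma~\ref{lem2} supplies $\|\tilde Lu\|_{F_{-1}}\leq C\|u\|_{F_1}$, whence $|\omega|\,\|u\|_{F_{-1}}\leq\|f\|_{F_{-1}}+C\|u\|_{F_1}$. Combining these and absorbing the $\|u\|_{F_{-1}}$-term produces an $\omega_0\geq 0$ such that, for $|\omega|\geq\omega_0$ and $u\in D(L)$,
\[
\|u\|_{F_1}+(1+|\omega|)\|u\|_{F_{-1}}\leq C_0\|(L-i\omega I)u\|_{F_{-1}}.
\]

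To prove~(a), introduce $L_\epsilon=L-\epsilon J$ for $\epsilon\in(0,1]$; by Proposition~\ref{pro1}, parts 2 and 3, $L_\epsilon$ is maximal uniformly $J$-dissipative, so Proposition~\ref{pro1}(5) gives $i\RR\subset\rho(L_\epsilon)$. Conditions~(\ref{eq12}) and~(\ref{eq18}) persist with constants uniform in $\epsilon$, and the associated $F_{\pm 1}$-norms are uniformly equivalent to those for $L$; hence the displayed a priori holds for $L_\epsilon$ uniformly. For $g\in H$ and $|\omega|\geq\omega_0$ the family $u_\epsilon=(L_\epsilon-i\omega I)^{-1}g$ is uniformly bounded in $F_1$; extracting a weakly convergent subsequence $u_\epsilon\rightharpoonup u$ in $F_1$ and passing to the limit in $F_{-1}$ in $\tilde Lu_\epsilon=g+i\omega u_\epsilon+\epsilon Ju_\epsilon$ gives $\tilde Lu=g+i\omega u\in H$. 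By~(\ref{eq22}), $u\in D(L)$ with $(L-i\omega I)u=g$; injectivity from the a priori then yields $i\omega\in\rho(L)$, proving~(a). Part~(c) is the $\|u\|_{F_{-1}}$-half of the a priori, extended to $f\in F_{-1}$ by density.

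For~(b), fix $i\omega\in\rho(L)$ and any $|\omega_1|\geq\omega_0$ (so $i\omega_1\in\rho(L)$ by~(a)). The resolvent identity
\[
(L-i\omega I)^{-1}=(L-i\omega_1 I)^{-1}+i(\omega-\omega_1)(L-i\omega I)^{-1}(L-i\omega_1 I)^{-1}
\]
extends $(L-i\omega I)^{-1}$ to a bounded map $F_{-1}\to F_1$: the first summand is bounded $F_{-1}\to F_1$ by the a priori, and the second factors as $F_{-1}\to F_1\hookrightarrow H$ followed by $H\to F_1$ (the latter bounded by the closed graph theorem, since $i\omega\in\rho(L)$). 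Injectivity via~(\ref{eq22}) yields~(b). Part~(d) follows since $\omega\mapsto\|(L-i\omega I)^{-1}\|_{F_{-1}\to F_{-1}}$ is continuous on the compact segment $[-\omega_0,\omega_0]$ (resolvent identity), hence bounded there, which combined with~(c) gives the global bound. For~(e), $L^c$ is $m$-$J$-dissipative (Proposition~\ref{pro1}(7)) and satisfies the analogues of~(\ref{eq12}) and~(\ref{eq18}) with $F_1$, $F_{-1}$ unchanged (using the Hermitian property $[x,y]=\overline{[y,x]}$ and Lemma~\ref{lem2}). Applying~(c) to $L^c$ gives $\|(L^c+i\omega I)^{-1}\|_{F_{-1}\to F_{-1}}\leq c/(1+|\omega|)$; dualizing via $F_{-1}=(F_1)'$ (cf.\ proof of Lemma~\ref{lem2}) yields $\|(L-i\omega I)^{-1}\|_{F_1\to F_1}\leq c/(1+|\omega|)$. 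Interpolating this with the $F_{-1}\to F_{-1}$ estimate via~(\ref{eq15}) and $(F_1,F_{-1})_{1/2,2}=H$ then produces the geometric-mean bound $\|(L-i\omega I)^{-1}\|_{H\to H}\leq c/(1+|\omega|)$, which is~(\ref{eq20}); the global version follows as in~(d).

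The hardest step is~(a): $J$-dissipativity alone does not trap the imaginary spectrum of $L$, and the $F_1\to F_{-1}$ invertibility provided by the a priori is strictly weaker than the desired $D(L)\to H$ invertibility. The remedy is to shift to $L_\epsilon$ (whose imaginary spectrum is known empty by Proposition~\ref{pro1}(5)) and combine weak compactness in $F_1$ with the regularity lift~(\ref{eq22}) supplied by Lemma~\ref{lem2}.
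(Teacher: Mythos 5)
Your argument is correct and its skeleton is the one the paper uses: the same coercivity estimate obtained by combining $\Re[f,u]=\Re[Lu,u]$ with (\ref{eq18}) and the bound $\|\tilde Lu\|_{F_{-1}}\leq c\|u\|_{F_1}$ from Lemma~\ref{lem2}, then the Hilbert resolvent identity for b), d), and interpolation through $(F_1,F_{-1})_{1/2,2}=H$ for e). You deviate at two points. For a), the paper does not regularize: from injectivity (given by the a priori estimate) and the fact that $L-i\omega I$ is again $m$-$J$-dissipative, Proposition~\ref{pro1}(6) gives density of $R(L-i\omega I)$ in $H$, hence in $F_{-1}$; the inverse then extends by continuity, $\tilde L-i\omega I$ is an isomorphism of $F_1$ onto $F_{-1}$, and (\ref{eq22}) upgrades this to $i\omega\in\rho(L)$. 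Your vanishing-perturbation route via $L_\epsilon=L-\epsilon J$, Proposition~\ref{pro1}(5), weak compactness in $F_1$ and (\ref{eq22}) is also valid (and mirrors the shift $L-J$ used in Lemma~\ref{lem1}), but it costs you the uniformity-in-$\epsilon$ checks for (\ref{eq12}), (\ref{eq18}) and the equivalence of the $\epsilon$-dependent $F_{\pm1}$-norms, all of which the paper's shorter argument avoids. For e), the paper gets the $F_1\to F_1$ resolvent bound without dualizing: it commutes $(L-i\omega I)^{-1}$ with the isomorphism $L-i\omega_0 I:F_1\to F_{-1}$ and reads the $F_1$-bound off the already proved $F_{-1}$-bound; your detour through $L^c$ works, but the assertion that $L^c$ satisfies (\ref{eq12}) and (\ref{eq18}) ``with $F_1$, $F_{-1}$ unchanged'' deserves the one-line justification that $[L^cu,v]=[u,Lv]$ and the density of $D(L^c)$ in $F_1$ (Lemma~\ref{lem2}) give $\Re[L^cu,u]=\Re[\tilde Lu,u]$ on $D(L^c)$, so the two scales of spaces coincide; with that spelled out, your duality-plus-interpolation version of e) is a legitimate, if longer, alternative.
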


\begin{proof}
Examine the equation
\begin{equation}\label{eq23}
Lu-i\omega u =f,\ \ \omega\in {\mathbb R},
\end{equation}
where $u\in D(L)$.
As a consequence, we have the equality
$
-{\rm Re\,} [Lu,u]=-{\rm Re\,}[f,u],
$
which in view of  (\ref{eq18}) can be rewritten as
$$
-{\rm Re\,} [Lu,u] + \delta\|u\|^2 \leq -{\rm Re\,}[f,u]-\delta m{\rm Re\,} [Lu,u]+ \delta m \|u\|_{F_{-1}}^2\ \  (\delta>0).
$$
Choosing  $\delta=1/(1+m)$, we infer
\begin{equation}\label{eq24}
\|u\|_{F_1}^2 \leq -(m+1){\rm Re\,}[f,u]+  m \|u\|_{F_{-1}}^2.
\end{equation}
From (\ref{eq21}) and  (\ref{eq23}) it follows that
\begin{equation}\label{eq25}
|\omega|^2\|u\|_{F_{-1}}^2\leq 2c^2\|u\|_{F_{1}}^2+2\|f\|_{F_{-1}}^2.
\end{equation}
Dividing this inequality by  $4c^2$ and adding it to the previous inequality, we derive
the inequality
\begin{equation}\label{eq261}
\frac{1}{2}\|u\|_{F_1}^2 +\delta_0|\omega|^2\|u\|_{F_{-1}}^2\leq
c_2\|f\|_{F_{-1}}^2-(m+1){\rm Re\,}[f,u]+m\|u\|_{F_{-1}}^2\ \ (\delta_0>0).
\end{equation}
The Schwartz inequality $|[f,u]|\leq \|f\|_{F_{-1}}\|u\|_{F_{1}}$ holds.
Using this inequality in the right-hand side of  (\ref{eq261}) and the inequality
\begin{equation}\label{cau}
|ab|\leq \varepsilon |a|^p/p+ |b|^q/(q\varepsilon^{q/p}),\ \ \varepsilon\in (0,\infty),\  1/p+1/q=1, \ p\in (1,\infty),
\end{equation}
where  $p=q=2$, we infer
\begin{equation}\label{eq26}
\frac{1}{2}\|u\|_{F_1}^2 +\delta_0|\omega|^2\|u\|_{F_{-1}}^2\leq c_3(\varepsilon)\|f\|_{F_{-1}}^2+2\varepsilon\|u\|_{F_{1}}^2 +
m\|u\|_{F_{-1}}^2.
\end{equation}
Choosing $\varepsilon=1/8$ and transferring the summand with $\|u\|_{F_1}$ into the left-hand side we arrive at
the inequality
\begin{equation}\label{eq27}
\|u\|_{F_1}^2 +|\omega|^2\|u\|_{F_{-1}}^2\leq c_4\|f\|_{F_{-1}}^2+c_5\|u\|_{F_{-1}}^2,
\end{equation}
where  $c_4,c_5$ are some constants independent of  $u$.
Putting  $\omega_0=2\sqrt{c_5}+1$, we arrive at the inequality  (\ref{eq19}) which implies that
${\rm ker\,}(L-i\omega I)=\{0\}$ for $\omega\in I_{\omega_0}$. The property  6 of Proposition  \ref{pro1} implies that
the subspace  $R(L-i\omega I)$ is dense in  $H$ and thus in $F_{-1}$.
The estimate  (\ref{eq19}) implies that the operator  $(L-i\omega I)^{-1}$ defined on a dense subspace of $F_{-1}$
admits an extension by continuity on the whole  $F_{-1}$. Therefore,  the operator $\tilde{L}-i\omega I$ is an isomorphism
of $F_{1}$ onto  $F_{-1}$, where  $\tilde{L}:F_1\to F_{-1}$ is an extension of $L$ to a continuous map of  $F_1$ into $F_{-1}$.
The equality  (\ref{eq22}) ensures the inclusion  $i\omega\in \rho(L)$ for all  $\omega\in I_{\omega_0}$.
Let $i\omega\in \rho(L)$ ($\omega\in {\mathbb  R}$).
 Take an arbitrary number  $\omega_1\in I_{\omega_0}$.
In view of (\ref{eq11}) we have that
$$
(L-i\omega I)^{-1}=(L-i\omega_1 I)^{-1}-i(\omega_1-\omega)(L-i\omega I)^{-1}(L-i\omega_1)^{-1}
$$
In this case, for $f\in H$, we have
$$
\|(L-i\omega I)^{-1}f\|_{F_1}\leq \|(L-i\omega_1 I)^{-1}f\|_{F_1}+(|\omega|+|\omega_1|)\|(L-i\omega I)^{-1}(L-i\omega_1 I)^{-1}\|_{F_1}.
$$
Since  $0\in \rho(L-i\omega I)$, there exists a constant  $c>0$ such that  $\|u\|_{F_1}\leq c\|(L-i\omega I)u\|$ for all  $u\in D(L)$.
Using this inequality and the embedding  $F_1\subset H$, we obtain that
$$
\|(L-i\omega I)^{-1}f\|_{F_1}\leq \|(L-i\omega_1 I)^{-1}f\|_{F_1}+c(|\omega|+|\omega_1|)\|(L-i\omega_1 I)^{-1}\|_{F_1}.
$$
Since  $|\omega_1|\geq \omega_0$, the right-hand side can be estimated by
$c\|f\|_{F_{-1}}$. Thus, we arrive at the estimate
$$
\|(L-i\omega I)^{-1}f\|_{F_1}\leq c\|f\|_{F_{-1}}\ \  \forall f\in R(L-i\omega I).
$$
As before,  we establish that the operator  $\tilde{L}-i\omega I:F_1\to F_{-1}$ is continuously invertible.
The statement  d) easily follows from  b) and c). The statement e) follows from interpolation arguments.
In what follows, we identify  $L$ and its extension $\tilde{L}$.
The operator  $(L-i\omega I)^{-1}$ ($\omega\in I_{\omega_0}$) is a continuous mapping of
 $F_{-1}$ into  $F_{-1}$ satisfying the inequality
\begin{equation}\label{eq28}
\|(L-i\omega I)^{-1}u\|_{F_{-1}}\leq c\|u\|_{F_{-1}}/(1+|\omega|)\ \ \forall \omega\in I_{\omega_0}
\end{equation}
for some constant  $c>0$ and all ` $u\in F_{-1}$. Take  $u\in F_{1}$.
In this case,  $g=Lu-i\omega_0 u\in F_{-1}$ and   $(L-i\omega I)^{-1}g\in F_{1}$; thereby
$(L-i\omega_0 I)(L-i\omega I)^{-1}u=(L-i\omega I)^{-1}g\in F_1$ and
\begin{equation}\label{eq29}
 \|(L-i\omega I)^{-1}u\|_{F_{1}}\leq c \|(L-i\omega I)^{-1}g\|_{F_{-1}}\leq
 \frac{ c_1\|g\|_{F_{-1}}}{1+|\omega|}\leq  \frac{c_2\|u\|_{F_1}}{1+|\omega|}
\end{equation}
for all   $ \omega\in I_{\omega_0}$ and           $u\in F_{1}$.
Thus,  $(L-i\omega I)^{-1}|_{F_1}\in L(F_1)$.
The conventional properties of interpolation spaces  (see, for instance, the claim a) of Theorem 1.3.3 in \cite{tr01})
yield $(L-i\omega I)^{-1}|_{(F_1,F_{-1})_{1/2,2}}\in L((F_1,F_{-1})_{1/2,2})$  and the estimates  (\ref{eq28}), (\ref{eq29})
and the equality  (\ref{eq15}) ensure  the estimates  (\ref{eq20}). The last statement is obvious.
\end{proof}

\begin{lemma}\label{lem4}
 {Assume that  $H$ is a Krein space and $L:H\to H$ is an $m$-$J$-dissipative operator such that

{\rm 1)} there exist maximal nonnegative and nonpositive
subspaces  $M^{\pm}$ invariant under  $L$ such that  $H=M^+ + M^-$ {\rm (}the sum is direct{\rm )};

{\rm 2)} the subspace  $(M^+\cap D(L))+(M^-\cap D(L))$ is dense in $D(L)$.

Then if  $i\omega\in \rho(L)$ $(\omega\in {\mathbb R})$ then  $i\omega \in \rho(L|_{M^{\pm}})$.}
\end{lemma}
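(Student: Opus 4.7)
The plan is to verify, for $i\omega \in \rho(L)$, that the restriction $L|_{M^{\pm}} - i\omega I$ is a bijection from $D(L)\cap M^{\pm}$ onto $M^{\pm}$ with bounded inverse. The two sign cases are symmetric, so I focus on $M^{+}$. Two preliminary observations will be used. First, $L$ is $m$-$J$-dissipative and hence closed (property 1 of Proposition~\ref{pro1}), while $M^{+}$ is closed in $H$ (any maximal semidefinite subspace of a Krein space is automatically closed, by continuity of $[\cdot,\cdot]$ and the Cauchy--Schwarz inequality for the semidefinite form restricted to $M^{+}$); hence $L|_{M^{+}}$ is a closed operator in the Hilbert space $M^{+}$. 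Second, because $H = M^{+} + M^{-}$ is a topological direct sum of closed subspaces, the associated oblique projections $Q^{\pm}$ onto $M^{\pm}$ along the complementary summand are bounded by the closed graph theorem.

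Injectivity of $L|_{M^{+}} - i\omega I$ on $D(L)\cap M^{+}$ is immediate from $i\omega \in \rho(L)$. For surjectivity, take $f \in M^{+}$ and set $u = (L - i\omega I)^{-1} f \in D(L)$; the whole game reduces to proving $u \in M^{+}$. By hypothesis~2, I can choose a sequence $v_{n} = v_{n}^{+} + v_{n}^{-}$ with $v_{n}^{\pm} \in D(L)\cap M^{\pm}$ and $v_{n} \to u$ in the graph norm of $L$. Put $g_{n}^{\pm} = (L - i\omega I) v_{n}^{\pm}$; the $L$-invariance of $M^{\pm}$ ensures $g_{n}^{\pm} \in M^{\pm}$, while $g_{n}^{+} + g_{n}^{-} = (L - i\omega I) v_{n} \to f$ in $H$. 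Applying the bounded projections $Q^{\pm}$ and using $f \in M^{+}$, I obtain $g_{n}^{+} \to f$ in $M^{+}$ and $g_{n}^{-} \to 0$ in $M^{-}$.

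The concluding step is to apply the continuous operator $(L - i\omega I)^{-1}$ to these two convergences, which gives $v_{n}^{+} = (L - i\omega I)^{-1} g_{n}^{+} \to u$ and $v_{n}^{-} = (L - i\omega I)^{-1} g_{n}^{-} \to 0$ in $H$. Since each $v_{n}^{+}$ lies in the closed subspace $M^{+}$, the limit $u$ lies there as well, so $u \in D(L)\cap M^{+}$ and $(L|_{M^{+}} - i\omega I) u = f$, establishing surjectivity. Boundedness of $(L|_{M^{+}} - i\omega I)^{-1}$ is then inherited from $(L - i\omega I)^{-1}$ by restriction, so $i\omega \in \rho(L|_{M^{+}})$. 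The delicate point---the only real obstacle---is the passage from the approximants back to $u$; it is resolved by the interplay of three ingredients: condition~2 supplies approximants that respect the decomposition, $L$-invariance of $M^{\pm}$ forces the images $g_{n}^{\pm}$ to respect the same decomposition, and boundedness of $Q^{\pm}$ permits the componentwise passage to the limit. If any one of the three is missing the argument collapses.
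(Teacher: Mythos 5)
Your proof is correct and follows essentially the same route as the paper: graph-norm approximation supplied by condition 2, invariance of $M^{\pm}$, boundedness of the oblique projections coming from the direct sum of closed subspaces, and the bounded inverse $(L-i\omega I)^{-1}$. The paper merely packages these ingredients slightly differently (after reducing to $\omega=0$ it shows $(L-i\omega I)(D(L)\cap M^{\pm})$ is closed and dense in $M^{\pm}$, hence equal to it, and then cites the inverse mapping theorem), so the content is the same.
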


\begin{proof}
Put $H_1^{\pm}=M^{\pm}\cap D(L)$ and $H_1=D(L)$, where the graph norm  is introduced.
Without loss of generality, we can assume that  $\omega=0$;
otherwise, we consider the operator  $L-i\omega I$. By condition,
 $L(H_1^{\pm})\subset M^{\pm}$. The condition  1) implies that the subspaces
 $M^{\pm}$ are closed subspaces of  $H$. By definition,
$H_1^{\pm}$ are closed subspace of  $H_1$. Since  $0\in \rho(L)$, $L(H_1^{\pm})$ are closed subspaces
of  $H$. In view of  2), the subspace $L(H_1^+ + H_1^-)$ is dense in $H$. But in this case
the subspaces $L(H_1^{\pm})$ are dense in  $M^{\pm}$. Since they are closed,  $L(H_1^{\pm})=M^{\pm}$. The inverse mapping theorem implies
that  $0\in  \rho(L|_{M^{\pm}})$.
\end{proof}

\begin{lemma}\label{lem5}
{  Let $H$ be a Krein space and let $L:H\to H$ be an  $m$-$J$-dissipative operator
satisfying the conditions  {\rm (\ref{eq12}) and (\ref{eq18})}. Then there exist maximal nonnegative and nonpositive subspaces
 $M^{\pm}$ invariant under  $L$ such that
$\overline{C^{\pm}}\subset \rho(L|_{M^{\pm}})$,  $H=M^+ + M^-, $
where the sum is direct, and  $C^{\pm}=\{z\in {\mathbb C}:\ \pm\textrm{Re\,}z>0\}$. Denote by
$F_{-1}^\pm$ the completions of  $M^\pm$ with respect to the norm in  $F_{-1}$.
Then there exists a constant   $c>0$ such that
\begin{equation}\label{eq191}
\|(L-zI)^{-1}u\|_{F_{-1}}\leq c\|u\|_{F_{-1}}/(1+|z|)\ \ \forall z\in \overline{C^{\pm}}, \ \   \forall u\in F_{-1}^\pm,
\end{equation}
where  $L$ denotes  the extension of  $L$ to an operator of the class  $L(F_1,F_{-1})$.
The operators  $\pm L|_{F_{-1}^{\pm}}$ treated as operators from  $F_{-1}^{\pm}$ into $F_{-1}^{\pm}$
with domains  $F_1^\pm=F_1\cap M^{\pm}$ are generators of analytic semigroups.}
\end{lemma}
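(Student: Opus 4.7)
My strategy is to combine the extrapolation and resolvent machinery developed in Lemmas \ref{lem1}--\ref{lem3} with a Krein-space invariant-subspace construction, and then to extract the $F_{-1}$-resolvent bound and the analytic semigroup property by standard sectoriality arguments.

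First I would verify that the interpolation framework is fully in force: because $JL$ is $m$-dissipative, Proposition \ref{pro2} gives $(H_1,H_{-1})_{1/2,2}=H$, and Lemma \ref{lem1} converts this into $(F_1,F_{-1})_{1/2,2}=H$. Lemma \ref{lem2} extends $L$ to $\widetilde L\in L(F_1,F_{-1})$ and identifies $D(L)$ with $\{u\in F_1:\widetilde Lu\in H\}$, while Lemma \ref{lem3} supplies the resolvent bounds in both $F_{-1}$- and $H$-norms on the high-frequency part of the imaginary axis. These are the quantitative ingredients for the rest of the argument.

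Next I would construct $M^\pm$ by invoking the Pontryagin--Langer--Azizov-type existence theorem for $m$-$J$-dissipative operators satisfying (\ref{eq12}), (\ref{eq18}) and the interpolation identity (\ref{eq15}), as developed by the author in \cite{pyat1,pyat2,pyat3}. This yields maximal nonnegative $M^+$ and nonpositive $M^-$, both invariant under $\widetilde L$, with $H=M^+ + M^-$ direct and with the density condition 2) of Lemma \ref{lem4} built into the construction. The spectral inclusion $\overline{C^+}\subset\rho(L|_{M^+})$ then drops out structurally: on $M^+$ the indefinite form $[\cdot,\cdot]$ is a nonnegative semi-inner product, $L|_{M^+}$ is dissipative with respect to it, and maximality promotes this to $m$-dissipativity, so Proposition \ref{pro1}(1) gives $C^+\subset\rho(L|_{M^+})$; extension to the boundary $i\mathbb R$ is then forced by Lemma \ref{lem3}(d) together with Lemma \ref{lem4}. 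The symmetric argument handles $M^-$.

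For the $F_{-1}$-bound (\ref{eq191}), I would first lift $\widetilde L$-invariance to $F_{-1}^\pm$ by density, which makes $(L-zI)^{-1}$ act on $F_{-1}^\pm$ for $z\in\overline{C^\pm}$. Lemma \ref{lem3}(c)/(d) already gives the bound $c/(1+|\omega|)$ on $i\mathbb R$, and the extension to the whole closed half-plane follows from the Hilbert resolvent identity (\ref{eq11}) exactly as in the proof of Lemma \ref{lem3}(e), using dissipativity of $L|_{M^\pm}$ with respect to $\pm[\cdot,\cdot]$ to control the resolvent in the open half-plane. The semigroup statement is then immediate: after the substitution $\zeta=\mp z$, the bound (\ref{eq191}) reads $\|(\pm L|_{F_{-1}^\pm}-\zeta I)^{-1}\|\le c/(1+|\zeta|)$ on the closed right half-plane of $\zeta$, which is precisely the criterion for $\pm L|_{F_{-1}^\pm}$ to generate a bounded analytic semigroup (see \cite[Sect.~7.1.1]{haase}).

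The hard part will be the construction of $M^\pm$ together with the spectral localization; this is the substantive Krein-space content and genuinely depends on the full invariant-subspace machinery of \cite{pyat1,pyat2,pyat3}, since the interpolation identity (\ref{eq15}) and the coercivity (\ref{eq18}) enter in a non-trivial way to rule out pathological invariant subspace configurations. Once $M^\pm$ with the stated properties are in hand, the resolvent bound (\ref{eq191}) and the analytic semigroup generation are fairly mechanical consequences of Lemmas \ref{lem1}--\ref{lem4} together with standard interpolation and semigroup facts.
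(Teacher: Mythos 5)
Your outline of the final portion (the half-plane resolvent estimate via the sign condition $\pm\Re z\,[u,u]\ge 0$ on $M^{\pm}$, and generation of analytic semigroups from that sectorial-type bound) matches the paper. But the part you correctly identify as "the hard part" --- constructing $M^{\pm}$ and localizing their spectra --- rests on a false premise and is, moreover, not what the paper does. Your first step claims that, since $JL$ is $m$-dissipative, Proposition \ref{pro2} gives $(H_1,H_{-1})_{1/2,2}=H$ and hence (\ref{eq15}). This is wrong: Proposition \ref{pro2} applied to $JL$ concerns the Sobolev tower of $JL$, whose extrapolation space is the completion of $H$ in $\|(JL-\lambda)^{-1}u\|=\|(L-\lambda J)^{-1}Ju\|$, and this is in general \emph{not} the space $H_{-1}$ built from the resolvent of $L$. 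If (\ref{eq14}) held automatically for every $m$-$J$-dissipative operator, the necessity results (Theorems \ref{th3}--\ref{th6}) and the whole point of the paper --- that (\ref{eq14}) is a nontrivial necessary and sufficient condition --- would be vacuous. Consequently your invocation of the existence machinery of \cite{pyat1,pyat2,pyat3} (essentially Theorems \ref{th1}--\ref{th2} of this paper) is unavailable: it needs (\ref{eq14})/(\ref{eq15}) together with $i\mathbb{R}\subset\rho(L)$ or the resolvent bound (\ref{eq200}), none of which are hypotheses of Lemma \ref{lem5}; under (\ref{eq12}) and (\ref{eq18}) alone, Lemma \ref{lem3} only yields $I_{\omega_0}\subset\rho(L)$, so you cannot even rule out spectrum on the imaginary axis. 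Your further claim that on $M^+$ the form $[\cdot,\cdot]$ makes $L|_{M^+}$ dissipative, that "maximality promotes this to $m$-dissipativity", and that Proposition \ref{pro1}(1) then gives $C^+\subset\rho(L|_{M^+})$ is also unsound: on a maximal nonnegative subspace $[\cdot,\cdot]$ is only a possibly degenerate semi-inner product whose associated seminorm need not be comparable with the $H$-norm, so Proposition \ref{pro1} does not apply, and maximality of $M^+$ as a subspace has no bearing on maximal dissipativity of the restriction.

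The paper's own proof does not construct $M^{\pm}$ at all: the subspaces with $\overline{C^{\pm}}\subset\rho(L|_{M^{\pm}})$ and $H=M^++M^-$ are taken as given (the lemma functions as a conditional statement feeding the necessity result, Theorem \ref{th5}, whose hypotheses supply exactly these subspaces). The content actually proved is: from $i\mathbb{R}\subset\rho(L|_{M^{\pm}})$ and the decomposition one gets $D(L)=H_1^++H_1^-$ and hence $i\mathbb{R}\subset\rho(L)$; Lemma \ref{lem3} then gives (\ref{eq191}) on the imaginary axis; the bound on the closed half-planes follows by repeating the energy estimates of Lemma \ref{lem3} for $L|_{M^{\pm}}$, using $\Re z\,[u,u]\ge 0$ on $M^{\pm}$ for $\pm\Re z\ge 0$; and the semigroup statement follows from this resolvent estimate via \cite[Chap.~2, Theorem 4.6]{eng}. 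So your proposal is sound only where it parallels this conditional part; the existence argument you supply would not survive scrutiny, both because of the interpolation error above and because it is circular with respect to the sufficiency theorems of the paper itself.
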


\begin{proof} By Lemma  \ref{lem3}, there exits a constant  $\omega_0>0 $ such that  $I_{\omega_0}\in \rho(L)$. Using the embedding
 $i{\mathbb R}\subset \rho(L|_{M^{\pm}})$,  we obtain that  $D(L)=H_1^{+}+ H_1^-$, with $H_1^{\pm}=D(L)\cap M^{\pm}$.
This equality yields  $i{\mathbb R}\in \rho(L)$. Lemma  \ref{lem3} implies that
the estimate  \eqref{eq191} is valid for all  $z$ lying on the imaginary axis. To obtain the estimate
\eqref{eq191}, we just repeat the arguments of Lemma \ref{lem3} for the operators  $L|_{M^{\pm}}$. To validate
the estimate, we employ the inequality
 ${\rm Re\,}z[u,u]\geq 0$ for all  $u\in M^{\pm}$ and $z$ with $\pm {\rm Re\,}z\geq 0$.
The last assertion of the lemma follows from the conventional properties of
analytic semigroups   (see \cite[Chap. 2, Theorem  4.6 (e)]{eng}).
\end{proof}

\section{Main results.}

The most part of the statements of the following theorem is obtained in
\cite{pyat1} (see also  \cite[Chap.1, Theorem 4.1]{pyat2}.

\begin{theorem}\label{th1}
 { Let $L:H\to H$ be an $m$-$J$-dissipative operator in a Krein space  $H$ such that
  $i{\mathbb R}\in \rho(L)$ and the conditions  {\rm (\ref{eq14})}  and   {\rm (\ref{eq200})} hold.
Then there exist maximal nonnegative and maximal nonpositive subspaces  $H^\pm$ invariant under
 $L$. The whole space  $H$ is representable as the direct sum  $H  = H^+  + H^-$,
$\sigma(\mp L|_{H^\pm})\subset {\mathbb C}^\pm$, and the operators  $\pm L|_{H^\pm}$ are generators of
analytic semigroups.}
\end{theorem}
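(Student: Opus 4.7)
My plan is to reduce Theorem \ref{th1} to Lemma \ref{lem5}. The main obstruction is that Lemma \ref{lem5} requires the coerciveness bound (\ref{eq18}), whereas here we only assume the weaker $H$-norm resolvent estimate (\ref{eq200}) on $i{\mathbb R}$, together with the interpolation identity (\ref{eq14}) and $i{\mathbb R}\subset \rho(L)$. To circumvent this, I would first verify that (\ref{eq12}) holds implicitly, since (\ref{eq14}) together with (\ref{eq200}) forces $\tilde L\in L(F_1,F_{-1})$ by a real interpolation argument (interpolating the resolvent bound (\ref{eq200}) on $H$ against the trivial $F_1\to F_1$ bound). I would then approximate $L$ by $L_\varepsilon=L-\varepsilon J$ for $\varepsilon>0$; each $L_\varepsilon$ is uniformly $J$-dissipative by Property 2 of Proposition \ref{pro1} and hence satisfies (\ref{eq18}), so Lemma \ref{lem5} produces maximal semidefinite subspaces $M_\varepsilon^\pm$ invariant under $L_\varepsilon$ with $\overline{C^\pm}\subset \rho(L_\varepsilon|_{M_\varepsilon^\pm})$ and with $\pm L_\varepsilon|_{M_\varepsilon^\pm}$ generating analytic semigroups.

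The main obstacle is the passage to the limit $\varepsilon\to 0$: one needs convergence of the $M_\varepsilon^\pm$ to genuine maximal semidefinite invariant subspaces for $L$ itself. I would parametrize $M_\varepsilon^+$ by angular operators $K_\varepsilon:R(P^+)\to R(P^-)$ with $\|K_\varepsilon\|\le 1$, extract a weak-$*$ cluster point $K$, and define $H^+$ as the graph of $K$ (and symmetrically $H^-$ via an angular operator from $R(P^-)$ to $R(P^+)$). The resolvent bound (\ref{eq200}), which is uniform in small $\varepsilon$, combined with (\ref{eq14}), provides equicontinuity of $(L_\varepsilon-i\omega I)^{-1}$ in $H$; this allows me to pass the invariance relation $(L_\varepsilon-i\omega I)^{-1}(M_\varepsilon^+)\subset M_\varepsilon^+$ to the limit and conclude $(L-i\omega I)^{-1}(H^+)\subset H^+$, hence invariance of $H^+$ under $L$. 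Maximality and nonnegativity of $H^+$ are preserved because $\|K\|\le 1$, and analogously for $H^-$.

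Once $H^\pm$ are in hand, Lemma \ref{lem4} gives $i{\mathbb R}\subset \rho(L|_{H^\pm})$ (its density hypothesis (2) is inherited from the approximation via the uniform resolvent bounds), and the direct sum decomposition $H=H^++H^-$ follows from the standard complementarity of maximal nonnegative and maximal nonpositive subspaces whose angular operators are contractions. Passing (\ref{eq191}) to the limit along the approximating sequence yields the spectral inclusions $\sigma(\mp L|_{H^\pm})\subset {\mathbb C}^\pm$, and the analytic semigroup generation of $\pm L|_{H^\pm}$ then follows from the resulting resolvent bound on $\overline{C^\pm}$ via \cite[Chap. 2, Theorem 4.6(e)]{eng}, exactly as at the end of the proof of Lemma \ref{lem5}.
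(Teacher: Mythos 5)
Your reduction has two foundational gaps before the limit argument even starts. First, condition (\ref{eq12}) is \emph{not} implied by the hypotheses of Theorem \ref{th1}. Conditions (\ref{eq14}) and (\ref{eq200}) concern the scale $H_1,H_{-1}$ and resolvent decay in $H$, whereas (\ref{eq12}) (equivalently (\ref{eq13})) is a numerical-range statement: $|\Im[Lu,u]|\leq c(-\Re[Lu,u]+\|u\|^2)$. No interpolation of resolvent bounds can produce control of a numerical range. Already in the definite case $J=I$, take $L=-\bigoplus_n A_n$ with $A_n=\left(\begin{smallmatrix} n & 2n\\ 0 & n\end{smallmatrix}\right)$: this $L$ is $m$-dissipative, $i\mathbb{R}\subset\rho(L)$, (\ref{eq200}) holds since $\|(A_n-i\omega)^{-1}\|\leq 3/|n-i\omega|$, and (\ref{eq14}) holds automatically by Proposition \ref{pro2}; yet (\ref{eq13}) fails, because the numerical range of $A_n$ is the disc of radius $n$ centred at $n$, which contains points with $\Re z=1$ and $\Im z\sim\sqrt{2n}$. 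So the whole $F_1$--$F_{-1}$ machinery (Lemmas \ref{lem2}, \ref{lem3}, \ref{lem5}, Theorem \ref{th2}) is simply not available under the hypotheses of Theorem \ref{th1}; that is precisely why the paper states Theorems \ref{th1} and \ref{th2} separately. Second, Lemma \ref{lem5} is not an existence result: despite its wording, its proof takes the subspaces $M^{\pm}$ with $i\mathbb{R}\subset\rho(L|_{M^{\pm}})$ as given and only derives the estimate (\ref{eq191}) and the semigroup statement; it is used in the paper only in the necessity direction (Theorem \ref{th5}), where existence of $M^{\pm}$ is a hypothesis. Invoking it to ``produce'' $M_\varepsilon^{\pm}$ for $L_\varepsilon=L-\varepsilon J$ is circular --- if it did produce them, Theorem \ref{th2} would be an immediate corollary and its proof superfluous.

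Even if you obtained $M_\varepsilon^{\pm}$ for each $\varepsilon$ (which would require verifying the hypotheses of Theorem \ref{th2} for $L_\varepsilon$, i.e.\ again (\ref{eq12})), the passage to the limit is not carried out. The invariance relation is quadratic in the angular operator, of the form $P^-(L_\varepsilon-i\omega I)^{-1}(I+K_\varepsilon)=K_\varepsilon P^+(L_\varepsilon-i\omega I)^{-1}(I+K_\varepsilon)$, and weak-operator cluster points do not pass through such products (this is exactly why the classical fixed-point proofs need compactness of the corner operators, which you do not have here). Moreover, a maximal nonnegative and a maximal nonpositive subspace need not be complementary, so $H=H^++H^-$ does not follow merely from $\|K\|\leq 1$; and the resolvent bounds for $L_\varepsilon|_{M_\varepsilon^{\pm}}$ live on varying subspaces, so ``passing (\ref{eq191}) to the limit'' presupposes a convergence of subspaces you have not established. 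The paper's route (for Theorem \ref{th1} it refers to \cite{pyat1,pyat2} and \cite{gri2}, and the method is the one displayed in the proof of Theorem \ref{th2}) avoids all of this: one defines the projections $P^{\pm}f=-\frac{1}{2\pi i}\int_{\Gamma^{\pm}}\lambda^{-1}L(L+\lambda)^{-1}f\,d\lambda$ directly from the resolvent estimate (\ref{eq200}), extends them to $L(H)$ using (\ref{eq14}) together with the interpolation boundedness results of \cite{gri2}, and then proves semidefiniteness of the ranges via the identity (\ref{eq37}) and maximality via the Cauchy--Bunyakovskii inequality, with the analytic-semigroup statement coming from the sectorial resolvent estimate (\cite[Corollary 3.3]{gri2}). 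I recommend you rebuild the argument along these lines rather than by approximation.
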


The last claim of the theorem that  $L|_{H^\pm}$ are generators of analytic semigroups is not contained
in the corresponding theorem in \cite{pyat1}. However, it is a consequence of this theorem
and it was actually proven in \cite{gri2}
(see  \cite[Corollary 3.3 ]{gri2}).

In the following theorem we refine Theorem  4.2  of \cite[Chap. 1]{pyat2} (see also \cite{pyat1}); in contrast to
\cite{pyat1,pyat2}, we do not require the uniform $J$-dissipativity of  $L$ and prove some new statements.

\begin{theorem}\label{th2}
{ Let  $L:H\to H$ be an  $m$-$J$-dissipative operator
satisfying the conditions  $i{\mathbb R} \subset \rho(L)$,  {\rm (\ref{eq12})}, and {\rm (\ref{eq14})}.
Then there exist maximal nonnegative and maximal nonpositive subspaces $H^\pm$ invariant under
 $L$. The whole space  $H$ is representable as the direct sum   $H  = H^+  + H^-$,
$\sigma(\mp L|_{H^\pm})\subset {\mathbb C}^\pm$, and the operators $\pm L|_{H^\pm}$ are generators of analytic semigroups.
If in addition  $L$ is strictly  $J$-dissipative or uniformly  $J$-dissipative then
the corresponding subspaces $H^+$ and $H^-$ can be chosen to be positive and negative or
uniformly positive and uniformly negative, respectively.}
\end{theorem}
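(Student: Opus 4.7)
The plan is to reduce Theorem \ref{th2} to Theorem \ref{th1}. The only hypothesis of Theorem \ref{th1} not directly assumed in Theorem \ref{th2} is the resolvent estimate (\ref{eq200}) on the imaginary axis, so the main task is to derive it from $i\mathbb{R}\subset\rho(L)$, (\ref{eq12}) and (\ref{eq14}). Theorem \ref{th1} will then supply the subspaces $H^\pm$, the direct decomposition $H=H^++H^-$, the localisation $\sigma(\mp L|_{H^\pm})\subset\mathbb{C}^\pm$, and the analytic-semigroup generation property. Only the strict/uniform refinement will require a separate argument.

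To produce (\ref{eq200}), I would first extract (\ref{eq18}) from (\ref{eq14}). By Lemma \ref{lem1}, (\ref{eq14}) is equivalent to the interpolation identity $(F_1,F_{-1})_{1/2,2}=H$, and the standard real interpolation inequality for an interpolation couple with $\theta=1/2$, $q=2$ yields
\[
\|u\|_H\leq C\|u\|_{F_1}^{1/2}\|u\|_{F_{-1}}^{1/2}\qquad\forall u\in F_1.
\]
Squaring, applying $2ab\leq \varepsilon a^2+\varepsilon^{-1}b^2$ with $\varepsilon=1/C^2$, and recalling $\|u\|_{F_1}^2=-\Re[Lu,u]+\|u\|^2$ so as to absorb the resulting $\|u\|^2$ term on the left, I obtain exactly (\ref{eq18}). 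All hypotheses of Lemma \ref{lem3} are then satisfied; part (d), combined with $i\mathbb{R}\subset\rho(L)$, extends (\ref{eq19}) to the whole imaginary axis, and part (e) then upgrades this through (\ref{eq15}) to the $H$-estimate (\ref{eq200}). At this point Theorem \ref{th1} applies and delivers the first assertion of Theorem \ref{th2}.

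For the refinement I argue by contradiction. Suppose $L$ is strictly $J$-dissipative and pick $u\in H^+$ with $[u,u]=0$; set $u_t=e^{tL|_{H^+}}u$. Analyticity of the semigroup generated by $L|_{H^+}$ puts $u_t\in D(L)\cap H^+$ for $t>0$, so $f(t)=[u_t,u_t]$ is continuous, $f(0)=0$, $f(t)\geq 0$ since $H^+$ is nonnegative, and $f'(t)=2\Re[Lu_t,u_t]<0$ unless $u_t=0$. This forces $u_t\equiv 0$ for $t>0$, and injectivity of the analytic semigroup then gives $u=0$, proving strict positivity of $H^+$; strict negativity of $H^-$ is symmetric. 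In the uniformly $J$-dissipative case, replacing the bound on $f'$ by $f'(t)\leq -2\delta\|u_t\|^2$ and combining with an exponential decay estimate $\|u_t\|\leq Me^{-\omega t}\|u\|$ yields, upon integration, $[u,u]\geq 2\delta\int_0^\infty\|u_t\|^2\,dt\geq c\|u\|^2$ for some $c>0$. The main obstacle is securing a uniform spectral gap $\sigma(-L|_{H^+})\subset\{\Re z\geq\omega\}$ needed for the exponential decay; this is obtained by rerunning the energy estimates of Lemma \ref{lem3} on the subspace $H^+$, now with the sharper lower bound $-\Re[Lu,u]\geq\delta\|u\|^2$ built in from the uniform $J$-dissipativity.
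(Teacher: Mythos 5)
Your reduction of the first assertion to Theorem \ref{th1} is sound, and it is essentially the paper's own opening step: by Lemma \ref{lem1} the hypothesis (\ref{eq14}) gives (\ref{eq15}), the interpolation inequality $\|u\|\leq C\|u\|_{F_1}^{1/2}\|u\|_{F_{-1}}^{1/2}$ then yields (\ref{eq18}) exactly as you describe, and Lemma \ref{lem3}, parts d) and e), together with $i\mathbb{R}\subset\rho(L)$ produce (\ref{eq200}); the paper then re-runs the construction of the projections rather than citing Theorem \ref{th1}, but your shortcut is legitimate. Your argument for the strictly $J$-dissipative case (the function $f(t)=[e^{tL|_{H^+}}u,e^{tL|_{H^+}}u]$ is nonnegative, vanishes at $t=0$, and is nonincreasing, hence identically zero, forcing $u=0$) is correct and is a genuinely different, and arguably cleaner, route than the paper's, which instead shows the isotropic part of $H^+$ is trivial by using $L^{-1}v\in H^+$ and deriving $Lu+L^cu=0$, hence $\Re[Lu,u]=0$.

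The uniformly $J$-dissipative case, however, contains a genuine gap. From $f'(t)\leq -2\delta\|u_t\|^2$ you correctly get $[u,u]\geq 2\delta\int_0^\infty\|u_t\|^2\,dt$, but the next step, $\int_0^\infty\|u_t\|^2\,dt\geq c\|u\|^2$, cannot follow from the exponential decay $\|u_t\|\leq Me^{-\omega t}\|u\|$: decay gives the \emph{upper} bound $\int_0^\infty\|u_t\|^2\,dt\leq (M^2/2\omega)\|u\|^2$, i.e., the opposite direction. Moreover the inequality you need is simply false for general analytic semigroups with unbounded generator: if $L|_{H^+}$ were $-A$ with $A$ selfadjoint, positive, with eigenvalues $\lambda_n\to\infty$ and normalized eigenvectors $e_n$, then $\int_0^\infty\|e^{-tA}e_n\|^2\,dt=1/(2\lambda_n)\to 0$ while $\|e_n\|=1$; securing a spectral gap for $L|_{H^+}$ does not help, since the failure occurs at high frequencies, not near the imaginary axis. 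The correct way to pass from $[u,u]\geq 2\int_0^\infty(-\Re[Lu_t,u_t])\,dt$ to $[u,u]\geq c\|u\|^2$ is not through the $H$-norm of the orbit but through its $F_1$-norm: uniform $J$-dissipativity makes $-\Re[Lv,v]$ the square of an equivalent norm on $F_1$, so the integral controls $\|u_\cdot\|^2_{L_2(0,\infty;F_1)}$ and, via the equation $u_t'=Lu_t$ and (\ref{eq12}), also $\|u_\cdot'\|^2_{L_2(0,\infty;F_{-1})}$; the trace theorem \cite[Theorem 1.8.3]{tr01} combined with $(F_1,F_{-1})_{1/2,2}=H$, i.e., (\ref{eq15}), then bounds $\|u\|_H^2$ by $[u,u]$. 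This is precisely the estimate (\ref{eq38}) plus the trace argument in the paper's proof (carried out there on a dense set of initial data $u_n\in D(L^2)\cap H^+$ and passed to the limit), and it is the ingredient missing from your proposal.
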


\begin{proof}
By Lemma  \ref{lem3}, we have the estimate
(\ref{eq200}) (note that the condition (\ref{eq18}) is fulfilled due to the equality (\ref{eq15}) of Lemma  \ref{lem1}).
The operators $P^{\pm}$ of the parallel projection onto  $H^{\pm}$ (see  \cite{pyat1}-\cite{pyat3}) such that
$H=P^+H+ P^-H$, $P^+P^-=P^-P^+=0$ are constructed as follows.
As is known, there exits a constant
 $\delta > 0$  such that
$$
S={\mathbb C} \setminus (S^+ \cup S^- ) \subset \rho(L),
$$
$$
S^{+(-)}  =\{\lambda : |{\rm arg}\,\lambda| < \pi/2 - \delta\  (|{\rm arg}\,\lambda| >\pi/2  + \delta), |z|> \delta \}
$$
and, for every ray  ${{\rm arg}\,\lambda = \theta}\subset  S$, the resolvent estimate
\begin{equation}\label{eq30}
\|(L - \lambda)^{-1}f \| \leq c_1 \|f\| (1+ |\lambda|)^{-1}
\end{equation}
holds.
We have that
$$
P^{\pm}f =- \frac{1}{2\pi i} \int_{\Gamma^{\pm}} \frac{L(L+\lambda)^{-1}}{\lambda}  f \,dz,\
\Gamma^{\pm}  =\partial S^{\pm},\  f \in  D(L),
$$
where the integration over $\Gamma^{\pm}$ is taken in the positive direction
with respect to the domains  $S^{\pm}$. As it is easy to see, the integral are normally convergent for  $f\in D(L)$
and thus the quantities  $P^{\pm}f$ are defined correctly.
Lemma  \ref{lem1} implies that
the operators $P^{\pm}$ are extensible to operators of the class
$L(H)$. Indeed, as a consequence of Theorem 1.15.2 (see also Sect. 1.15.4) in \cite{tr01}  and the reiteration theorem
 (see \cite{tr01})
\begin{equation}\label{eq31}\begin{array}{c}
(H_1,H)_{\theta,2}=(H_1,H_{-1})_{\theta/2,2},\  (H,H_{-1})_{1-\theta,2}=(H_1,H_{-1})_{1-\theta/2,2},\ \vspace{8pt} \\
H=(H_1,H_{-1})_{1/2,2}=((H_1,H_{-1})_{\theta/2,2},(H_1,H_{-1})_{1-\theta/2,2})_{1/2,2}\end{array}
\end{equation}
The former part of the last equality is the condition (\ref{eq14}).
At the same time   the projections $P^{\pm}$ are extensible to operators of the class
$L((H_1,H)_{\theta,2})$, $L((H,H_{-1})_{1-\theta,2})$ (see  \cite[Sect. 3]{gri2}).
In this case the last equality in (\ref{eq31}) ensures the claim.
Moreover, using the  definitions,  we establish that
 $(P^+ + P^-)f = f$ for all  $f\in D(L)$ and thus for all   $f \in H$.
Thereby the space   $H$ is representable as the direct sum  $H  = H^+   + H^-$, with
$ H^{\pm}  = \{u \in H  : P^{\pm} u = u \}$.
Demonstrate that  $H^+$ and $H^-$ are nonnegative and nonpositive subspaces, respectively. The proof is similar to
the corresponding proof in   \cite{pyat1,pyat2}.
For example, we consider  $H^+$. In view of the density
of $D(L^k )$ in $H$  for all  $k>0$ (\cite[Sect. 1.14.1]{tr01}) and boundedness of
 $P^+$, for every  $u \in  H^+$ there exists a sequence
$u_n \in   D(L^2 ) \cap H^+ $ such that  $\|u_n  - u \| \to 0$  as  $ n \to \infty$.
Indeed, the operators $P^{\pm}$ and $L$ commute in the sense that
if  $f \in D(L^k)$ ($k\geq 1$) then $P^{\pm}f \in D(L^k)$ and $L^k P^{\pm} f = P^{\pm} L^k f$.
Find a sequence  $v_n \in D(L^2)$ such that
$\| v_n - u \| \to 0 $  as $n \to \infty$. Put
$u_n = P^+ v_n \in D(L^2) \cap H^+$. In this case
$\| u_n - u \| = \|P^+(v_n - u) \| \to 0$ as $n \to \infty$.
Define an operator
\begin{equation}\label{eq32}
P u =- \frac{1}{2\pi i} \int_{\Gamma^{+}} e^{-\lambda t} \frac{L(L + \lambda)^{-1} }{\lambda}  u \,d\lambda,\
t> 0.
\end{equation}
Take  $v_n(t) = Pu_n$. Using  (\ref{eq32}), we infer
\begin{equation}\label{eq33}
\|v_n\|_{L_{2}(0,\infty;H)} \leq
 \frac{1}{2\pi } \int_{\Gamma^{+}}
 \frac{c \|u_n\|}{|\lambda| \sqrt{\Re\,\lambda}}\, |d \lambda| \leq c_1 \|u_n \|,
\end{equation}
where the constant  $c$ is taken from the inequality
$
\|L(L+\lambda)^{-1} u_n \| \leq c \|u_n\|,\ \
\Gamma^{\pm}  =\partial S^{\pm}.
$
Employing the normal convergence of the integrals obtained
from (\ref{eq32}) by the formal differentiation with respect to
 $t$ and the formal application of $L$, we can say that
$v_n(t) \in L_2(0,\infty;F_1)$ and the distributional derivative
 $v'(t)$ possesses the property
$v_n'(t) \in L_2(0,\infty;H)$. Hence, after a possible modification on a set of zero measure
 $v_n(t) \in C ([0,\infty);H)$, i.~e., $v_n(t)$ is a continuous function with values in  $H$.
It is immediate that
\begin{equation}\label{eq34}
v_n(0) = u_n,
\end{equation}
\begin{equation}\label{eq35}
S v_n = v_n' - L v_n = 0.
\end{equation}
Using the equalities  (\ref{eq34}) and  (\ref{eq35}) and integrating by parts, we infer
\begin{equation}\label{eq36}
0 = \Re\,\int_{0}^{\infty} [Sv_n(t), v_n(t)] \,dt
= -[v_n(0),v_n(0)]_0 - \int_{0}^{\infty} \Re\,[Lv_n,v_n](\tau)\,d\tau.
\end{equation}
Therefore, we conclude that
\begin{equation}\label{eq37}
[u_n,u_n]_0 = -\int_{0}^{\infty} {\rm Re\,}[Lv_n,v_n](\tau)\,d\tau,
\end{equation}
i.~e., $[u_n,u_n] \geq 0$. Passing to the limit on  $n$ in this inequality, we derive  $[u,u]\geq 0$, i.~e., $H^+$ is a nonegative
subpace. By analogy, we can prove that  $H^-$ is a nonpositive subspace.
Prove the maximality of
$H^+$. Assume the contrary that there exists a nonnegative subspace  $M$ such that
$H^+ \subset M$ and $H^+ \neq M$. Take an element
 $\varphi \in M\setminus H^+$. In this case  $\varphi+ \psi\in M$ and
$[\varphi+ \psi, \varphi+ \psi]_0 \geq 0$ for all  $ \psi \in H^+$. Represent  $\varphi$ in the form $\varphi =\varphi_1 + \varphi_2$,
$\varphi_1 \in H^+$ , $\varphi_2 \in H^-$. Taking  $\psi =- \varphi_1$, we obtain that  $\varphi_2\in M$ and
$[\varphi_2 , \varphi_2 ]_0 \geq 0$. But  $\varphi_2 \in  H^-$. Hence, $[\varphi_2 ,\varphi_2]_0=0$.
Since  $[\varphi,\varphi]\geq 0$ for all  $\varphi\in M$ and
$-[\varphi,\varphi]\geq 0$ for all  $\varphi\in H^-$,  the Cauchy-Bunyakovski\u{i} inequality
\begin{equation}\label{cauchy}
|[\varphi,\psi]|\leq |[\varphi,\varphi]|^{1/2}|[\psi,\psi]|^{1/2}
\end{equation}
holds for all  $\varphi,\psi\in M$ and all  $\varphi,\psi\in H^-$. This inequallity yields
$[\varphi_2, \psi]_0 = 0$ for all  $\psi \in H^+\subset M$ and all  $\psi\in H^-$.
Thus, we have that $\varphi_2=0$.
The maximality of  $H^-$ is proven by analogy. Now  we demonstrate that the subspaces $H^{\pm}$ are strictly or uniformly definite
whenever  $L$ is strictly or uniformly $J$-dissipative. Let  $L$ be a uniformly  $J$-dissipative operator.
In this case we can use the norm $\|u\|_{F_1}^2=-{\rm Re\,}[Lu,u]$ as an equivalent norm
in   $F_1$. In view of (\ref{eq35}) and (\ref{eq37}), we conclude that
\begin{equation}\label{eq38}
[u_n,u_n]\geq
\delta_0(\|v_n'\|_{L_2(0,\infty;F_{-1})}^2+\|v_n\|_{L_2(0,\infty;F_{1})}^2),
\end{equation}
where $\delta_0$ is a positive constant independent of  $n$.
Applying (\ref{eq38}) to the difference  $u_n-u_m$ and using the fact that
 $u_n$ is a Cauchy sequence in $H$, we derive that
the sequence  $v_n$ is a Cauchy sequence in  $L_2(0,\infty;F_1)$ and thereby it converges to some function
 $v(t)\in L_2(0,\infty;F_1)$ and also
$v_n'(t) \to v'(t)$ in $L_2(0,\infty;F_{-1})$. The trace theorem  (see  \cite[Theorem 1.8.3]{tr01}) and the inequality
 (\ref{eq15})  implies that $v_n(0)\to v(0)$ in  $H$ and there exists a constant
  $\delta>0$ such that
$$
\|v_n'\|_{L_2(0,\infty;F_{-1})}^2+\|v_n\|_{L_2(0,\infty;F_{-1})}^2\geq \delta\|u_n\|_{H}^2.
$$
This inequality and (\ref{eq38}) imply the estimate
$
[u_n,u_n]\geq \delta\|u_n\|_{H}^2/2.
$
Passing to the limit on $n$ in this estimate  we arrive at the inequality
$
[u,u]\geq \delta\|u\|_{H}^2/2
$
valid for all  $u\in H^+$ which ensures the uniform positivity  of $H^+$.
Similarly, we can prove that  $H^-$ is a unifrmly negative subspace.
Let  $L$ be a strictly  $J$-dissipative operator.
Demonstrate that the subspaces  $H^{\pm}$ are of definite sign.
Consider, for example, the subspace $H^+$.
Assume the contrary that this subspace is degenerate, i.~e.,    $H^+_0=H^+\cap (H^+)^{[\perp]}\neq \{0\}$,
where the symbol  $[\perp]$ stands for the orthogonal complement with respect to
an indefinite inner product.
We have that  $[\varphi,\varphi]=0$ for all  $\varphi\in H_0^+$.
The inequality  (\ref{cauchy}) implies that
$$
[\varphi,\psi]=0
$$
for all  $\varphi\in H_0^+$ and $\psi\in H^+$. Take $v\in H_0^+$. By construction,
the subspaces $H^{\pm}$ meet the conditions of Lemma \ref{lem4}.
Lemma \ref{lem4} implies that  $L^{-1}v\in H^+$ and thus
$
[L^{-1}v,v]=0
$
for all  $v\in H_0^{+}$. Since the operator $L^{-1}$ is also $J$-dissipative, for all such  $v$
$v\in {\rm ker\,}(L^{-1}+(L^{-1})^c)={\rm ker\,}(L^{-1}+(L^c)^{-1})$ (see \cite[Chap. 2, Sect. 1]{azi}).
Putting $u=L^{-1}v$, we infer
$u=-(L^{c})^{-1}Lu$ and thereby  $u\in D(L)\cap D(L^c)$.
Moreover,  $Lu+L^cu=v-v=0$. Hence, we have the equality  ${\rm Re\,}[Lu,u]=0$
which contradicts to the strict dissipativity of  $L$.
\end{proof}

Procced with necessary conditions. Actually, we will prove that the conditions
of Theorem  \ref{th2} are necessary and sufficient for its statement to be true.

\begin{theorem}\label{th3}
 { Assume that  $L:H\to H$ is an  $m$-$J$-dissipative operator such that

{\rm 1)} there exists  $\omega\in {\mathbb R}:\ i\omega\in \rho(L)$;

{\rm 2)} there exist maximal uniformly positive and uniformly negative
subspaces  $M^{\pm}$ invariant under  $L$ such that  $H=M^+ + M^-$ {\rm (}the sum is direct{\rm )};

{\rm 3)} the subspace  $(M^+\cap D(L))+(M^-\cap D(L))$ is dense in $D(L)$.

Then  the equality {\rm (\ref{eq14})} holds.}
\end{theorem}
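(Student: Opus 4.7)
The plan is to exploit the invariant-subspace decomposition $H=M^++M^-$ to reduce the problem to applying Proposition~\ref{pro2} separately on $M^+$ and on $M^-$, and then to combine the two resulting interpolation identities using the stability of the real $K$-method under topological direct sums. Since $M^\pm$ are uniformly definite, $\pm[\cdot,\cdot]|_{M^\pm}$ are inner products equivalent to $(\cdot,\cdot)|_{M^\pm}$; in particular $M^\pm$ are closed in $H$ and the parallel projections $P^\pm$ onto them are bounded on $H$.

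First I would invoke Lemma~\ref{lem4} to obtain $i\omega\in\rho(L|_{M^\pm})$. Condition~3) together with boundedness of $P^\pm$ yields density of $M^\sigma\cap D(L)$ in $M^\sigma$, and $L|_{M^+}$ is dissipative in the Hilbert space $(M^+,[\cdot,\cdot])$ because $-\Re[Lu,u]\ge 0$. Since $\rho(L|_{M^+})$ is open and contains $i\omega$, it contains some $\lambda$ with $\Re\lambda>0$, and surjectivity of $L|_{M^+}-\lambda$ upgrades the restriction to an $m$-dissipative operator. The analogous argument applies to $-L|_{M^-}$ in $(M^-,-[\cdot,\cdot])$. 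Proposition~\ref{pro2} applied to each restriction then delivers
\[
(H_1^\sigma,H_{-1}^\sigma)_{1/2,2}=M^\sigma,\qquad \sigma\in\{+,-\},
\]
where $H_{\pm 1}^\sigma$ are the Sobolev tower spaces of $L|_{M^\sigma}$ built at the resolvent point $i\omega$.

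The remaining task is to identify $H_1$ and $H_{-1}$ with the topological direct sums $H_1^++H_1^-$ and $H_{-1}^++H_{-1}^-$. The splitting of $H_1$ is straightforward: Lemma~\ref{lem4} gives that $L-i\omega$ maps $M^\sigma\cap D(L)$ bijectively onto $M^\sigma$, so for every $w\in D(L)$ the decomposition $(L-i\omega)w=v^++v^-$ with $v^\sigma=P^\sigma(L-i\omega)w\in M^\sigma$ is transported by the resolvent to $w=w^++w^-$ with $w^\sigma\in M^\sigma\cap D(L)$, and boundedness of $P^\pm$ on $H$ yields $\|w^\sigma\|_{H_1}\le\|P^\sigma\|\,\|w\|_{H_1}$. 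For $H_{-1}$ the same invariance shows that $(L-i\omega)^{-1}$ commutes with $P^\pm$, so
\[
\|P^\sigma u\|_{H_{-1}}=\|(L-i\omega)^{-1}P^\sigma u\|=\|P^\sigma(L-i\omega)^{-1}u\|\le\|P^\sigma\|\,\|u\|_{H_{-1}},
\]
and $P^\pm$ extend to bounded projections on $H_{-1}$, producing the desired splitting. Since the real $K$-method commutes with topological direct sums, the piecewise identities combine to give $(H_1,H_{-1})_{1/2,2}=M^++M^-=H$, which is \eqref{eq14}.

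The hardest point is the extension of the parallel projections to bounded operators on the extrapolation space $H_{-1}$: this is not automatic for arbitrary bounded projections on $H$ and depends essentially on the invariance of $M^\sigma$ under the resolvent, which is precisely what Lemma~\ref{lem4} supplies.
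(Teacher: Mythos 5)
Your proposal is correct and follows essentially the same route as the paper: Lemma~\ref{lem4} gives the commutation $P^{\pm}L^{-1}=L^{-1}P^{\pm}$, from which the projections extend boundedly to $H_1$ and $H_{-1}$, the restrictions $L|_{M^{\pm}}$ are $m$-dissipative in the equivalent inner products $\pm[\cdot,\cdot]$ so Proposition~\ref{pro2} yields $(H_1^{\pm},H_{-1}^{\pm})_{1/2,2}=M^{\pm}$, and the pieces are assembled by interpolation of complemented subspaces (the paper cites Triebel's Theorem 1.17.1, which is exactly your ``$K$-method commutes with topological direct sums'' step). Your explicit upgrade of dissipativity of $L|_{M^{\pm}}$ to $m$-dissipativity via openness of the resolvent set is a detail the paper leaves implicit, but it is the same argument.
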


\begin{proof}
Without loss of generality we may assume that  $\omega=0$.
Define the projections  $P^{\pm}$ onto  $M^{\pm}$, respectively, corresponding to the decomposition
$H=M^+ +M^-$. Thus,  $P^+ + P^- =I$, $P^+P^-=P^-P^+=0$, $P^{\pm}u=u\ $ for all   $u\in M^{\pm}$.
In view of Lemma \ref{lem4},  $L^{-1}\varphi\in M^{\pm}\cap H_1$  ($H_1=D(L)$) for  $\varphi\in M^{\pm}$. Hence, we have that
\begin{equation}\label{eq39}
P^{\pm}L^{-1}u=L^{-1}P^{\pm}u
\end{equation}
for all  $u\in H$. As a result,   $P^{\pm}|_{H_1}\in L(H_1)$,  the equalities
$P^{\pm}Lu=LP^{\pm}u$ are true for  $u\in H_1$, and  $H_1=H_{1}^+ +H_{1}^-$ ($H_{1}^{\pm}=R(P^{\pm})$).
The equalities  (\ref{eq39}) imply that  $P^{\pm}$ are extensible to operators
of the class $L(H_{-1})$ and as before the equality
 $P^++P^-=I$ yields
 $H_{-1}=H_{-1}^+ +H_{-1}^-$, $H_{-1}^{\pm}=R(P^{\pm})$.
In this case, we infer
$$
P^{\pm}\in L(H_{1-2\theta}) \ \forall \theta\in (0,1), \   H_{1-2\theta}=(H_1,H_{-1})_{\theta,2}
$$
and Theorem  1.17.1 in \cite{tr01} implies that
$$
H_{1-2\theta}^{\pm}=(H_1^{\pm},H_{-1}^{\pm})_{\theta,2}=\{u\in H_{1-2\theta}:\  P^{\pm}u=u\}=P^{\pm}H_{1-2\theta}.
$$
In particular, we have that
\begin{equation}\label{eq411}
H_{1-2\theta}=P^+H_{1-2\theta} + P^-H_{1-2\theta}=H_{1-2\theta}^{+} + H_{1-2\theta}^{-}\ \ \textrm{(the sum is direct)}.
\end{equation}
Let the symbol $[\cdot,\cdot]$ denote an indefinite inner product in  $H$. Since
the subspaces  $M^{\pm}$ are uniformly definite, the expression
$(u,v)_{\pm}=\pm [u,v]$ is an equivalen inner product on  $M^{\pm}$, respectively.
Consider the operator  $L|_{M^{+}}:M^{+}\to M^+$. In this new inner product the operator
$L|_{M^{+}}$ is $m$-dissipative. By Proposition  2,
$H_0^+=(H_1^+, H_{-1}^+)_{1/2,2}=M^+$. Similarly we have that $H_0^-=(H_1^-, H_{-1}^-)_{1/2,2}=M^-$.
In this case the claim results from the equality (\ref{eq411}) with $\theta=1/2$.
\end{proof}

\begin{theorem}
\label{th4}
 { Assume that   $L:H\to H$ is an $m$-$J$-dissipative operator satisfying the condition {\rm (\ref{eq12})} and
there exists a maximal uniformly positive {\rm (}or uniformly negative{\rm )} subspace
 $M$ invariant under  $L$ and such that  $i{\mathbb R}\cap \rho(L|_{M})\cap \rho(L)\neq \emptyset$.
Then the equality  {\rm (\ref{eq14})} holds.}
\end{theorem}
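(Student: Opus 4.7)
The plan is to reduce (\ref{eq14}) to Proposition~\ref{pro2} applied separately to two Hilbert-space operators extracted from the block structure of $L$, then patch the pieces using the direct sum $H = M + M^{[\perp]}$. Replacing $L$ by $L - i\omega I$, I may assume $0 \in \rho(L) \cap \rho(L|_M)$. Since $M$ is maximal uniformly positive, $N := M^{[\perp]}$ is maximal uniformly negative, $H = M + N$ is a direct ($J$-orthogonal) decomposition, and $[\cdot,\cdot]|_M$ together with $-[\cdot,\cdot]|_N$ give Hilbert inner products on the summands equivalent to $\|\cdot\|_H$. Invariance of $M$ under $L$ forces the block form
$$
L = \begin{pmatrix} L_{11} & L_{12} \\ 0 & L_{22} \end{pmatrix}
$$
with $L_{11} = L|_M$.

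Proposition~\ref{pro2} applied to $L_{11}$ (which is $m$-dissipative and injective on the Hilbert space $M$) gives $(D_{L_{11}}, R_{L_{11}})_{1/2,2} = M$. Because $L|_M u = Lu$ and $(L|_M)^{-1}u = L^{-1}u$ on $M$, these spaces identify, up to equivalent norms, with the closed subspaces $D(L) \cap M \subset H_1$ and the closure of $M$ in $H_{-1}$; general interpolation then gives $M \hookrightarrow (H_1, H_{-1})_{1/2,2}$. The block form together with $0 \in \rho(L) \cap \rho(L_{11})$ yields $0 \in \rho(L_{22})$, and evaluating $J$-dissipativity on vectors $(0, u_2)$ shows that $L_{22}$ is $m$-accretive on the Hilbert space $N$. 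A second application of Proposition~\ref{pro2} (to $-L_{22}$) gives $(D_{L_{22}}, R_{L_{22}})_{1/2,2} = N$. To transport this identification into $H_1$ and $H_{-1}$, I would use the canonical section $u_2 \mapsto \tilde u := (-L_{11}^{-1}L_{12} u_2, u_2) \in D(L)$, for which $L\tilde u = (0, L_{22}u_2)$, so $\|\tilde u\|_{H_1} \approx \|L_{22}u_2\|_N$, together with the $H_{-1}$-side analogue coming from the block form of $L^{-1}$.

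Combining both embeddings with $H = M + N$ gives $H \hookrightarrow (H_1, H_{-1})_{1/2,2}$ with equivalent norms; the reverse embedding is obtained by mirroring the argument on each summand, using the equalities on $M$ and $N$ to recover the $H$-norm of each component from the interpolation norm. The main obstacle is the $N$-transport: one must verify that the section $u_2 \mapsto \tilde u$, together with its $H_{-1}$-counterpart, interpolates to a map whose limit on $N$ recovers the natural inclusion $N \hookrightarrow H$ rather than the section itself (which adds an $M$-component involving $L_{11}^{-1}L_{12}$ that may fail to extend boundedly to all of $N$). A cleaner alternative is to apply the construction to the $c$-adjoint $L^c$, for which $N$ is invariant by standard Krein-space theory, and then use the duality of the scales $(H_1, H, H_{-1})$ for $L$ and $L^c$ together with Proposition~\ref{pro3}.
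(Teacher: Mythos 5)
Your treatment of $M$ itself is essentially sound: $L|_{M}$ is $m$-dissipative and boundedly invertible on the Hilbert space $(M,[\cdot,\cdot])$, Proposition~\ref{pro2} gives the interpolation identity for its own scale, and monotonicity of the $K$-functional embeds that scale into $(H_1,H_{-1})_{1/2,2}$. The genuine gap is everything involving $N=M^{[\perp]}$. The upper-triangular block form you write down presupposes that the projection $P$ onto $M$ along $N$ maps $D(L)$ into $D(L)$ (equivalently, that $D(L)\cap N$ is dense in $N$ and that $L_{12},L_{22}$ are densely defined on $N$). None of this follows from the hypotheses: only $M$ is assumed invariant, $D(L)\cap N$ may even be trivial, so there is no domain on which to ``evaluate $J$-dissipativity on vectors $(0,u_2)$'', no operator $L_{22}$ to which Proposition~\ref{pro2} could be applied, and no section $u_2\mapsto(-L_{11}^{-1}L_{12}u_2,u_2)$. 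You sense this (``the main obstacle''), but the proposed repair via $L^c$ does not close it: Krein-space duality does give that $N$ is invariant under $L^c$, but your construction then yields an embedding of $N$ into the interpolation space of the scale $D(L^c),H,H_{-1}(L^c)$, and you offer no mechanism for combining this with the $M$-embedding in the scale of $L$, nor for the reverse inclusion $(H_1,H_{-1})_{1/2,2}\subset H$; both steps would require boundedness of $P$ on the relevant scale, which is exactly what fails (or at least is unproven) for $H_1=D(L)$ and $H_{-1}$. Symptomatically, your argument never uses the hypothesis (\ref{eq12}), which is essential.

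The paper resolves precisely this difficulty by abandoning the operator scale and working in the form-domain scale: using (\ref{eq12}) it extends $L$ to $L(F_1,F_{-1})$, introduces the auxiliary operator $L_0=L-PJP-(I-P)J(I-P)$ whose form is coercive on $F_1$, proves that the $J$-selfadjoint projection $P$ onto $M$ along $N$ \emph{is} bounded on $F_1$ and on $F_{-1}$ (this is where uniform definiteness of $M$ and $N$ enters), identifies $F_{-1}^{\pm}$ with negative spaces so that Proposition~\ref{pro3} gives $(F_1^{+},F_{-1}^{+})_{1/2,2}=M$ and, after passing to $L^c$ to handle $N$, $(F_1^{-},F_{-1}^{-})_{1/2,2}=N$; Triebel's Theorem 1.17.1 then sums the pieces to get (\ref{eq15}), and Lemma~\ref{lem1} converts this into (\ref{eq14}). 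Without some substitute for the boundedness of $P$ on the interpolation couple you use, your outline cannot be completed, so as it stands the proof has a genuine gap rather than being a correct alternative route.
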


\begin{proof}
Assign $H_1^+=H_1\cap M$, $H_1=D(L)$,  $F_{1}^+=F_1\cap M$ (the spaces $F_1$, $F_{-1}$ were constructed before Proposition
 2). For definiteness, we assume that  $M$ is uniformly positive and $0\in \rho(L|_{M})\cap \rho(L)$.
Since $M$ is a maximal uniformly positive  subspace,  $M$ is projectively complete, i.~e.,
the whole space  $H$ is representable as the direct sum
 $H=M+N$, $N=M^{[\perp]}$. The subspace $N$ is uniformly negative  (see \cite[Chap. 1, Corollary 7.17]{azi}) and
the corresponding projection  $P$ onto $M$ parallel to  $N$ is $J$-selfadjoint.
Introduce the operator  $L_0=L-PJP-(I-P)J(I-P)$ taking  $D(L_0)=D(L)$. We have that
$$
-{\rm Re\,}[L_0u,u]=-{\rm Re\,}[Lu,u]+ \|Pu\|^2+\|(I-P)u\|^2\geq \|u\|_{F_1}^2/2.
$$
Thus, the quantity  $-{\rm Re\,}[L_0u,u]$ is the square of an equivalent norm  in  $F_1$.
The operator  $L$ meets the condition  (\ref{eq12}) and thereby, by Lemma  \ref{lem2}, the operator   $L$ admits an extension
 $\tilde{L}$ to an operator of the class  $L(F_1,F_{-1})$. Put
$\tilde{L}_0=\tilde{L} -PJP-(I-P)J(I-P)\in L(F_1,F_{-1})$. By construction,  $L_0$ also satifies
the condition (\ref{eq12}).  As in Lemma  \ref{lem2}, we can establish that
$\tilde{L}_0$ is an isomorphism of $F_1$ onto  $F_{-1}$. Demonstrate that  $0\in \rho(L_0)$.
Consider the restriction of $\tilde{L}_0$ on $D_0=\{u\in F_1: \  \tilde{L}_0 u\in H\}$. On the one hand,
 $D(L)\subset D_0$. Indeed, if $u\in D(L)$ then  $u\in F_1$, $Lu\in H$, $PJPu, (I-P)J(I-P)u\in H$ and thus  $u\in D_0$.
Let  $u\in D_0$. In this case  $u\in F_1$ and thereby  $PJPu, (I-P)J(I-P)u\in H$, and  $\tilde{L}u\in H$.
Therefore,  $D_0\subset \{u\in F_1:\ \tilde{L}u\in H\}=D(L)$ (see  (\ref{eq22})).
Hence, $D_0=D(L)$. Since  $\tilde{L}_0$ is an isomorphism of  $F_1$ onto $F_{-1}$, the definition
of the class  $D_0$ readily implies that
$0\in \rho(L_0)$. In  what follows, we write $L,L_0$ rather than  $\tilde{L}$ and $\tilde{L}_0$, respectively.
Put $F_1^+=F_1\cap M$.
Since  $0\in \rho(L|_{M})\cap \rho(L)$, the operator $L$ maps  $D(L)\cap M$ onto $M$.
For $u\in D(L)\cap M$, we have that
$$
\|u\|_{F_1}\leq \sqrt{2}({\rm Re\,}[-L_0u,u])^{1/2}\leq
2\sup_{v\in F_1^+}|[L_0u,v]|/\|v\|_{F_1}\leq
$$
$$
\leq 2\sup_{v\in F_1}|[L_0u,v]|/\|v\|_{F_1}=2\|L_0u\|_{F_{-1}}\leq 2c\|u\|_{F_1},
$$
where the constant $c$ is independent of  $u$.
Whence, we infer
\begin{equation}
\sup_{v\in F_1^+}|[L_0u,v]|/\|v\|_{F_1}\leq
\sup_{v\in F_1}|[L_0u,v]|/\|v\|_{F_1}\leq 2c\sup_{v\in F_1^+}|[L_0u,v]|/\|v\|_{F_1}\
\end{equation}
for all  $ u\in D(L)\cap M$.
In particular, we have the inequality
\begin{equation}\label{eq421}
\sup_{v\in F_1^+}|[u,v]|/\|v\|_{F_1}\leq
\sup_{v\in F_1}|[u,v]|/\|v\|_{F_1}\leq 2c\sup_{v\in F_1^+}|[u,v]|/\|v\|_{F_1}\  \ \forall u\in M.
\end{equation}
Denote by  $F_{-1}^+$ the completion of
$M$ with respect to the norm
 $\sup_{v\in F_1^+}|[u,v]|/\|v\|_{F_1}$. In view of (\ref{eq421}), this norm is equivalent to the usual norm
 of the space  $F_{-1}$ and thus the space  $F_{-1}^+$ can be identified with a closed subspace of
 $F_{-1}$. The operator $L_0$ is an isomorphism  $F_1^+$ onto $F_{-1}^+$.
In view of uniform positivity of  $M$, the expression
$[\cdot,\cdot]$ is an equivalent inner product on $M$. Then the space
$F_{-1}^+$ coincides with the negative space constructed on the pair
 $F_1^+,M$ and Proposition  3 yields
$$
(F_1^+,F_{-1}^+)_{1/2,2}=M.
$$
Demonstrate that the subspace $N$ is invariant under  $L^c$ which is
$m$-$J$-dissipative along with  $L$ and $0\in \rho(L^c)$ (see \cite[Chapt. 2,  Sect. 1, Theorem 1.16]{azi}). Lemma
\ref{lem2} implies that $D(L^c)$ is densely embeded into  $F_1$ and thus
 $L^c$ as well as the operator $L$ itself admits an extension to
 a continuous  operator from  $F_1$ into $F_{-1}$.
By Proposition  1.11  in Sect. 1 of Chap.2 \cite{azi}, $L^c(D(L^c)\cap N)\subset N$. Since
 $0\in \rho(L|_M)$, $L^{-1}M\subset M$ and we have that
 $(L^c)^{-1}N\subset N$ in view of the equality  $(L^{-1})^c=(L^c)^{-1}$
 (\cite[Chap. 2,   Sect. 1, Proposition  1.6]{azi}). These two containments and the inclusion
 $0\in \rho(L^c)$ ensure that  $0\in \rho(L^c|_N)$.
The density of  $D(L^c)\cap N$ in  $N$ is obvious. Indeed, assume the contrary. In this case there exists
an element  $\varphi\in D(L^c)\cap N$ such that
 $[\varphi,\psi]=0$ for all  $\psi\in N$. Assign
$\psi=(L^c)^{-1}v$, $v\in N$. In this case,  $0=[\varphi, (L^c)^{-1}v]=[L^{-1}\varphi, v]$ and thereby
$L^{-1}\varphi\in D(L)\cap M$, i.~e., $\varphi\in M$. Therefore, we have that  $\varphi\in M\cap N=\{0\}$.
Put  $F_{1}^-=F_1\cap N$. Denote by  $F_{-1}^-$ the completion of  $N$ with respect to the norm
$\sup_{v\in F_1^-}|[u,v]|/\|v\|_{F_1}$. Repeating the above arguments for the subspace  $M$,
we conclude that this norm is equivalent ot the usual norm
of the space $F_{-1}$ and thus the subspace $F_{-1}^-$ can be identified with a closed subspace
of the space  $F_{-1}$. Moreover, we have that
$$
(F_1^-,F_{-1}^-)_{1/2,2}=N.
$$
We now show  that
\begin{equation}\label{eq451}
F_i=F_i^{+}+F_i^-, \ \ P\in L(F_i),\ i=1,-1,
\end{equation}
where the sum is direct.
Let  $i=-1$. Take    $u=u^+ + u^-$, $u^{+}\in M, u^-\in N$. In view of  (\ref{eq421}),
$$
\|u^+\|_{F_{-1}}\leq 2c\sup_{v\in F_1^+} |[u^+,v]|/\|v\|_{F_1}=
$$
$$
=2c\sup_{v\in F_1^+} |[u^+ + u^-,v]|/\|v\|_{F_1}\leq 2c\sup_{v\in F_1} |[u^+ + u^-,v]|/\|v\|_{F_1}\leq 2c\|u\|_{F_{-1}}
$$
Similarly,
$\|u^-\|_{F_{-1}}\leq 2c \|u\|_{F_{-1}}$ and we obtain that
$$
\|u\|_{F_{-1}}\geq \delta (\|u^+\|_{F_{-1}}+\|u^-\|_{F_{-1}}).
$$
These inequalities imply that the projections  $P$  and $(I-P)$ are extensible
to operators of the class  $L(F_{-1})$ and thereby the relation  (\ref{eq451}) is valid for  $i=-1$.
Prove that  $P|_{F_1}\in L(F_1)$. Indeed, for $v\in M$ and $u\in F_1$, we have
$$
|[v,Pu]|=|[v,u]|\leq \|v\|_{F_{-1}}\|u\|_{F_1}.
$$
Therefore, the expression  $[v,Pu]$ admits an extension being a linear
continuous functional over $F_{-1}^+$ and thus there exists an element
 $u^+\in F_1^+$ such that  $[v,Pu]=[v,u^+]$ for all  $v\in M$. As a result,
 $Pu=u^+\in F_1^+$ and the estimate
$$
\|Pu\|_{F_{1}}\leq c_1\sup_{v\in F_{-1}^+} |[v,Pu]|/\|v\|_{F_{-1}}\leq
c_1\sup_{v\in F_{-1}^+} |[v,u]|/\|v\|_{F_{-1}}\leq c_1\|u\|_{F_1}
$$
holds. In this case, we have that  $P,(I-P)\in L(F_1)$ and the equality
 (\ref{eq451}) holds for $i=1$.
Theorem  1.17.1 in  \cite{tr01} implies that
$$
(F_1,F_{-1})_{1/2,2}=(F_1^+,F_{-1}^+)_{1/2,2}+ (F_1^-,F_{-1}^-)_{1/2,2}=M+N=H.
$$
Next, Lemma  \ref{lem1} yields
$(H_1,H_{-1})_{1/2,2}=H$.
\end{proof}

\begin{theorem}\label{th5}
 {Assume that  $L:H\to H$ is an  $m$-$J$-dissipative operator such that  $i{\mathbb R}\subset \rho(L)$,
the conditions  {\rm (\ref{eq12})} and {\rm (\ref{eq18})} are fulfilled, and
there exist maximal nonnegative and maximal nonpositive
subspaces  $M^{\pm}$ of the space  $H$ invariant under  $L$ and such that
$$
\overline{C^{\pm}}\subset \rho(L|_{M^{\pm}}),\ \  H=M^+ + M^-,\
$$
where the sum is direct.
Then the equality  {\rm (\ref{eq14})} holds.
If  $L$ is strictly  $J$-dissipative or uniformly  $J$-dissipative then the corresponding
suspaces  $M^{+}$ and  $M^-$ are definite or uniformly definite, respectively.
}\end{theorem}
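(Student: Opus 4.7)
The plan is to parallel the proof of Theorem~\ref{th4}, treating the two given invariant subspaces $M^\pm$ separately and using the analytic-semigroup structure on $F_{-1}^\pm$ supplied by Lemma~\ref{lem5} as a replacement for the uniform positivity exploited there. The hypotheses \eqref{eq12} and \eqref{eq18} of Lemma~\ref{lem5} are part of ours, so the resolvent bound \eqref{eq191} on $F_{-1}^\pm$ for $z\in\overline{C^\pm}$ and the fact that $\pm\tilde L|_{F_{-1}^\pm}$ (with domain $F_1^\pm=F_1\cap M^\pm$) generate analytic semigroups on $F_{-1}^\pm$ are at our disposal. The additional hypothesis $i{\mathbb R}\subset\rho(L)$ gives via Lemma~\ref{lem3}(d) the full $F_{-1}$-resolvent bound on $i{\mathbb R}$; in particular $0\in\rho(L)\cap\rho(L|_{M^\pm})$.

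Let $P^\pm:H\to M^\pm$ be the parallel projections associated with $H=M^+\dotplus M^-$; they are bounded on $H$ because maximal semidefinite subspaces are closed. Invariance of $M^\pm$ under $L$ together with $0\in\rho(L)\cap\rho(L|_{M^\pm})$ yields $P^\pm L^{-1}=L^{-1}P^\pm$ on $H$, and the usual graph-norm argument then gives $P^\pm\in L(H_1)\cap L(H_{-1})$. I next extend $P^\pm$ to the $F$-scale by writing $P^+$ as a Riesz--Dunford spectral integral around $\sigma(L|_{M^+})\subset\overline{C^-}$, in the spirit of formula~\eqref{eq32}:
$$
P^+u=-\frac{1}{2\pi i}\int_{\Gamma^+}\frac{\tilde L(\tilde L+\lambda)^{-1}}{\lambda}\,u\,d\lambda.
$$
The estimate \eqref{eq191} on $F_{-1}^+$ together with the $F_{-1}$-bound of Lemma~\ref{lem3}(d) on $i{\mathbb R}$ deliver normal convergence of the integral in $L(F_{-1})$; carrying out the analogous construction for the $J$-adjoint $L^c$ on its invariant subspaces (the $J$-orthogonal complements of $M^\mp$; by Lemma~\ref{lem2}, $L^c$ is $m$-$J$-dissipative and satisfies \eqref{eq12}) and invoking the isomorphism $\tilde L:F_1\to F_{-1}$ from Lemma~\ref{lem3}(b) gives $P^\pm\in L(F_1)$ as well.

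With $P^\pm\in L(F_1)\cap L(F_{-1})$ we have $F_i=F_i^+\dotplus F_i^-$ for $i\in\{1,-1\}$, where $F_i^\pm=P^\pm F_i$, and Theorem~1.17.1 of~\cite{tr01} yields
$$
(F_1,F_{-1})_{1/2,2}=(F_1^+,F_{-1}^+)_{1/2,2}\dotplus(F_1^-,F_{-1}^-)_{1/2,2}.
$$
The main obstacle is the identification $(F_1^\pm,F_{-1}^\pm)_{1/2,2}=M^\pm$: unlike Theorem~\ref{th4}, where uniform positivity of $M$ made $[\cdot,\cdot]|_M$ an equivalent inner product and Proposition~\ref{pro3} applied at once, here $M^\pm$ are only semidefinite so $[\cdot,\cdot]|_{M^\pm}$ may be degenerate. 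I overcome this by applying Proposition~\ref{pro3} to the pair $(F_1^+,M^+)$ viewed in the $H$-inner product and showing the resulting negative space is equivalent to $F_{-1}^+$. The latter equivalence follows from the direct decomposition $F_1=F_1^+\dotplus F_1^-$ from the previous step (which lets us restrict, up to $\|P^+\|$, the supremum defining $\|\cdot\|_{F_{-1}}$ from $v\in F_1$ to $v\in F_1^+$) and from the identity $[u,v]=(Ju,v)$ together with the control of the pairing $[u,v]$ for $u\in M^+$, $v\in F_1^-$ through the sectorial functional calculus of $\tilde L|_{F_{-1}^+}$ furnished by Lemma~\ref{lem5}. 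Hence $(F_1^+,F_{-1}^+)_{1/2,2}=M^+$, and symmetrically for $M^-$; summing yields $(F_1,F_{-1})_{1/2,2}=H$, and Lemma~\ref{lem1} converts this to \eqref{eq14}.

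For the last assertion, when $L$ is strictly (respectively uniformly) $J$-dissipative, I rerun the second half of the proof of Theorem~\ref{th2} inside each $M^\pm$: for $u$ in a dense subspace of $M^+$ set $v(t)=T^+(t)u$ using the analytic semigroup $T^+$ on $F_{-1}^+$ from Lemma~\ref{lem5}, derive $v\in L_2(0,\infty;F_1^+)$ and $v'\in L_2(0,\infty;F_{-1}^+)$, integrate by parts as in~\eqref{eq36}--\eqref{eq37}, and apply the Cauchy--Bunyakovski\u{\i} inequality~\eqref{cauchy} (strict case) or the coercivity $-\Re[Lu,u]\geq\delta\|u\|^2$ combined with the trace-type bound~\eqref{eq38} (uniform case) to conclude that $M^+$ is definite or uniformly positive; the argument on $M^-$ is symmetric. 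Throughout, the hard step is the identification of interpolation spaces just discussed, where the absence of uniform positivity forces one to go through the sectorial functional calculus on $F_{-1}^\pm$.
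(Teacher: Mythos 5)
Your route diverges from the paper's at exactly the point you yourself flag as ``the main obstacle,'' and that step is not actually carried out. The identification $(F_1^{\pm},F_{-1}^{\pm})_{1/2,2}=M^{\pm}$, and the claimed equivalence of the negative space built on the pair $(F_1^{+},M^{+})$ in the $H$-inner product with $F_{-1}^{+}$, are asserted via an appeal to ``the sectorial functional calculus of $\tilde L|_{F_{-1}^{+}}$'' with no argument. This is a genuine gap, not a routine verification: since $M^{+}$ is only nonnegative, it may contain isotropic vectors $u\neq 0$ with $[u,v]=0$ for all $v\in M^{+}$, so the restricted supremum $\sup_{v\in F_1^{+}}|[u,v]|/\|v\|_{F_1}$ has no a priori lower bound in terms of the full $F_{-1}$-norm; the analogue of the two-sided inequality (\ref{eq421}) in Theorem~\ref{th4} was obtained precisely from uniform definiteness of $M$ and the $J$-selfadjointness of the projection via the coercive auxiliary operator $L_0=L-PJP-(I-P)J(I-P)$, none of which is available here. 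The paper avoids this identification altogether: it proves only the one-sided continuous embeddings $M^{\pm}\subset(F_1^{\pm},F_{-1}^{\pm})_{1/2,2}$ (estimates (\ref{eq50}), (\ref{eq51})), obtained by solving $u_t=L^{\pm}u$, $u(0)=u_0\in F_1^{\pm}$, with the analytic semigroup of Lemma~\ref{lem5} and maximal regularity, combining the energy identity for $[\cdot,\cdot]$, hypothesis (\ref{eq18}) and the trace theorem; then the monotonicity of $K$-functionals for subcouples ((\ref{eq53})) and the triangle inequality give $H\subset(F_1,F_{-1})_{1/2,2}$, and equality follows from a lemma in \cite{pyat2}, after which Lemma~\ref{lem1} yields (\ref{eq14}). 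No boundedness of projections on the $F$-scale and no identification of the subcouple interpolation spaces is ever needed.

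Your preliminary step is also circular as proposed. To define $P^{+}$ by the Dunford integral as an operator in $L(F_{-1})$ you need resolvent bounds for the full extended operator on contours entering the half-planes; Lemma~\ref{lem3}(d) gives bounds only on $i\mathbb{R}$, and (\ref{eq191}) gives bounds only for the restrictions to $F_{-1}^{\pm}$ -- gluing these into full-space bounds already presupposes the bounded decomposition $F_{-1}=F_{-1}^{+}+F_{-1}^{-}$ that the integral is supposed to produce. Moreover the integrand $\tilde L(\tilde L+\lambda)^{-1}/\lambda$ decays only like $1/|\lambda|$, so there is no normal convergence in operator norm; in Theorem~\ref{th2} the integral is first applied to $f\in D(L)$ and its extension to a bounded operator uses Grisvard's interpolation result together with the equality (\ref{eq14}) -- which here is the conclusion, not a hypothesis. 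The passage through $L^{c}$ and the $J$-orthogonal complements of $M^{\mp}$ is likewise unsupported, since for merely semidefinite $M^{\pm}$ these complements need not give a direct decomposition. Your final argument for definiteness in the strict/uniform cases is essentially sound and close to the paper's (the paper handles the strict case even more simply, via $[L^{-1}\psi,\psi]=0$ against $\Re[L^{-1}u,u]>0$), but it rests on $(F_1,F_{-1})_{1/2,2}=H$, which your main argument has not established.
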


\begin{proof} As in Lemma  \ref{lem5}, define the subspaces  $F_1^\pm\subset F_1$ and
$F_{-1}^\pm\subset F_{-1}$. By Lemma  \ref{lem5}, the operators
$\pm L|_{F_{-1}^\pm}: F_{-1}^\pm \to F_{-1}^\pm$ are generators of analytic semigroups.
For example, we consider the operator $L^+=L|_{F_{-1}^+}$. Take $T>0$. As a consequence of the trace
theorem  (\cite[Theorem 1.8.3]{tr01}),
for a  given element $u_0\in (F_1^+,F_{-1}^+)_{1/2,2}$, there exists a function  $v(t)$ such that
$v_t\in L_2(0,T;F_{-1}^+)$, $v\in L_2(0,T;F_{1}^+)$, and  $v(0)=u_0$.
As is well known  (see, for instance, Corollary 1.7 in \cite{kuns}), the Cauchy problem
\begin{equation}\label{eq41}
u_t-L^+u=f, \ \ u(0)=0,
\end{equation}
possesses the maximal regularity property, i.~e., for every
 $f\in L_2(0,T;F_{-1}^+)$, there exists a unique solution to this problem such that $u_t, L^+u\in L_2(0,T;F_{-1}^+)$.
Using the change  $u=v(t)+V(t)$, we reduce the problem
\begin{equation}\label{eq42}
u_t-L^+u=0,\ \ u|_{t=0}=u_0
\end{equation}
to the problem of the form  (\ref{eq41}) and thereby the problem  (\ref{eq42})
has a unique solution such that $u_t, L^+u\in L_2(0,T;F_{-1}^+)$ for all  $u_0\in  (F_1^+,F_{-1}^+)_{1/2,2}$.
Take  $u_0\in F_1^+\subset (F_1^+,F_{-1}^+)_{1/2,2}$.
There exists a constant
$\delta > 0$ such that
$$
S_\delta=\{\lambda : |{\rm arg}\,\lambda| \leq \pi/2 + \delta\}\subset \rho(L^+)
$$
and the resolvent estimates
\begin{equation}\label{eq43}
\|(L^+ - \lambda I)^{-1}f \|_{F_{-1}} \leq c_1 \|f\|_{F_{-1}} (1+ |\lambda|)^{-1}
\end{equation}
hold on every ray  ${{\rm arg}\,\lambda = \theta}\subset  S_{\delta}$. In this case
a solution to the problem  (\ref{eq42}) is representable in the form  (see, for instance,\cite{kuns})
$$
u(t) = \frac{1}{2\pi i} \int_{\Gamma}e^{\lambda t} (L-\lambda I)^{-1} u_0 \,dz,\
$$
where  $\Gamma=\partial S^{\delta}$,
$ S^{\delta}=S_{\delta}\cup \{z:\ |z|<\varepsilon\}$, the integration is perfomed in the positive direction  with respect to
the domain $S^{\delta}$, and the parameter  $\varepsilon$ is so small that  $\{z:\ |z|<\varepsilon\}\subset \rho(L^+)$.
Using the resolvent estimate of Lemma \ref{lem5} and using the arguments of the proof of
Theorem \ref{th3}, we infer
\begin{equation}\label{eq44}
\|u(t)\|_{L_2(0,T;F_{-1})}\leq c\|u_0\|.
\end{equation}
In view of (\ref{eq42}), we conclude that
$
[u_t,u](t)-[Lu,u]=0.
$
Since the expression $[u,u]$ is nonpositive,  integrating this equality in $t$
from $0$ to  $T$, we arrive at the inequality
\begin{equation}\label{eq45}
-2\int_{0}^T \Re [Lu,u](t)\,dt\leq [u_0,u_0].
\end{equation}
The inequality  (\ref{eq18}) and the estimates  (\ref{eq44}) and (\ref{eq45}) imply the inequality
\begin{equation}\label{eq46}
\int_0^T\|u(t)\|^2\,dt\leq c(-\int_{0}^T \Re [Lu,u](t)\,dt+ \|u(t)\|_{L_2(0,T;F_{-1})}^2)\leq c_1\|u_0\|^2.
\end{equation}
Using the estimate  (\ref{eq45}) again, we conclude that
\begin{equation}\label{eq47}
\|u(t)\|_{L_2(0,T;F_1)}^2\leq c_2\|u_0\|^2.
\end{equation}
This estimate and the equation (\ref{eq42}) imply that
\begin{equation}\label{eq48}
\|u_t\|_{L_2(0,T;F_{-1})}^2\leq c_3\|u_0\|^2.
\end{equation}
In this case the relations (\ref{eq47}) and (\ref{eq48}) imply the estimate
\begin{equation}\label{eq49}
\|u_t\|_{L_2(0,T;F_{-1})}^2+ \|u(t)\|_{L_2(0,T;F_1)}^2\leq c_2\|u_0\|^2.
\end{equation}
However, by the trace theorem  (\cite[Theorem 1.8.3]{tr01}), the  left hand-side of this inequality
is estimated from below by  $\delta\|u_0\|_{(F_1^+,F_{-1}^+)_{1/2,2}}^2$, where $\delta $ is a positive constant independent of
 $u_0,u$. This inequality ensures the embedding
 $M^+\subset (F_1^+,F_{-1}^+)_{1/2,2}$ and the estimate
\begin{equation}\label{eq50}
\|u_0\|_{(F_1^+,F_{-1}^+)_{1/2,2}}\leq c_3\|u_0\|.
\end{equation}
Similar arguments prove the embedding
 $M^-\subset (F_1^-F_{-1}^-)_{1/2,2}$ and the estimate
\begin{equation}\label{eq51}
\|v_0\|_{(F_1^-,F_{-1}^-)_{1/2,2}}\leq c_4\|v_0\|,\ \  \forall v_0\in M^-.
\end{equation}
In this case we use an auxiliary problem
\begin{equation}\label{eq52}
u_t+L^-u=0,\ \ u|_{t=0}=v_0\in F_1^-,\ \ L^-=L|_{M^-},
\end{equation}
rather than  the problem (\ref{eq42}).
Let $u_0\in F_1^+$ and let  $v_0\in F_1^-$.
Since the subspaces  $F_1^\pm$ and $F_{-1}^\pm$ are closed subspaces of the spaces
$F_1$ and $F_{-1}$ we have that
$$
K(t,u_0,F_1,F_{-1})\leq K(t,u_0,F_1^+,F_{-1}^+),\
K(t,v_0,F_1,F_{-1})\leq K(t,u_0,F_1^-,F_{-1}^-).
$$
The definition of the norm in the interpolation space yields
\begin{equation}\label{eq53}
\|u_0\|_{(F_1,F_{-1})_{1/2,2}}\leq \|u_0\|_{(F_1^+,F_{-1}^+)_{1/2,2}},\ \
\|v_0\|_{(F_1,F_{-1})_{1/2,2}}\leq \|v_0\|_{(F_1^-,F_{-1}^-)_{1/2,2}}.
\end{equation}
These inequalities, the triangle inequality, and the estimates
(\ref{eq50}) and (\ref{eq51}) guarantee the inequality
$$
\begin{array}{c}
\|u_0+v_0\|_{(F_1,F_{-1})_{1/2,2}}\leq
\|u_0\|_{(F_1^+,F_{-1}^+)_{1/2,2}} + \|v_0\|_{(F_1^-,F_{-1}^-)_{1/2,2}}\leq \vspace{8pt}\\ \leq
c(\|u_0\|+ \|v_0\|)\leq c_1\|u_0+v_0\|_{H},\end{array}
$$
where  $c,c_1$ are positive constants indepedent of  $u_0,v_0$.
The last inequality follows from the fact that the direct sum
of the spaces  $M^+,M^-$ coincides with  $H$.
The  inequality obtained implies that  $H\subset (F_1,F_{-1})_{1/2,2}$.
This containment ensures the equality $H=(F_1,F_{-1})_{1/2,2}$   (see \cite[Chap. 1, Lemma 3.17]{pyat2}) and
thereby  (\ref{eq14}) is true.
Demonstrate that the subspace $M^+$ is positive whenever  $L$ is strictly  $J$-dissipative and uniformly positive
whenever  $L$ is uniformly  $J$-dissipative.
Similar assertions are also valid for the subspace  $M^-$.
Let  $L$ be a strictly $J$-dissipative operator. Assume the contrary that
$M^+$ is degenerate, i.~e., there exists a nonzero element
 $\psi\in M^+$ such that  $[\psi,\psi]=0$.
The Cauchy-Bunyakowski\u{i} inequality for the form  $[\cdot,\cdot]$ on $M^+$
implies that  $\psi\in M^+\cap (M^+)^{[\perp]}$, i. e.,
$[\psi,v]=0$ for all  $v\in M^+$. Since  $0\in \rho(L|_{M^+})$,
we have that
$[L^{-1}\psi,\psi]=0$. On the other hand,
 ${\rm Re\,}[Lu,u]>0$ for all  $u\in D(L)$.
Hence,  ${\rm Re\,}[L^{-1}u,u]>0$ for all  $u\in H$ and thereby
 ${\rm Re\,}[L^{-1}\psi,\psi]>0$; a contradiction.
Let now  $L$ be a uniformly  $J$-dissipative.
Take $u_0\in F_1^+$ and consider a solution $u(t)$ to the problem  (\ref{eq42}).
It satisfies the estimate  (\ref{eq45}). Since $L$ is uniformly $J$-dissipative, the estimate
 (\ref{eq45}) yields
$$
\|u\|_{L_2(0,T;F_1)}^2\leq c[u_0,u_0],
$$
where  $c$ is a constant indepedent of  $u$.
In this case, from the equation  (\ref{eq42}) we have the inequality
$$
\|u_t\|_{L_2(0,T;F_{-1})}^2\leq c_1\|u\|_{L_2(0,T;F_1)}^2\leq c_2[u_0,u_0],
$$
The last two inequalities and the trace thorem (\cite[Theorem  1.8.3]{tr01})
imply that
$$
\|u_0\|_{(F_1,F_{-1})_{1/2,2}}^2\leq  c_3[u_0,u_0].
$$
But we have proven that  $(F_1,F_{-1})_{1/2,2}=H$.
Therefore,  $M^+$ is uniformly positive.
Similarly, we obtain that the subspace  $M^-$ is uniformly negative.
\end{proof}

As a consequence of Theorems  \ref{th2}  and \ref{th5}, we have the following result.

\begin{theorem}\label{th6}
{ Let  $L:H\to H$ be a maximal uniformly  $J$-dissipative operator satisfying
 {\rm (\ref{eq12})}. If the condition {\rm (\ref{eq14})} is fulfilled then
there exist maximal uniformly positive and uniformly negative
subspaces   $M^{\pm}$ of the space  $H$ invariant under  $L$ and such that
$$
\overline{C^{\pm}}\subset \rho(L|_{M^{\pm}}),\ \  H=M^+ + M^-,\
$$
where the sum is direct,
 and the operators  $\pm L|_{M^{\pm}}$ are generators of analytic semigroups.

If there exist maximal uniformly positive and uniformly negative subspaces
 $M^{\pm}$ of the space  $H$ invariant under  $L$ and such that
$$
\overline{C^{\pm}}\subset \rho(L|_{M^{\pm}}),\ \  H=M^+ + M^-,\
$$
where the sum is direct then the equality {\rm (\ref{eq14})} holds and the subspace
 $M^+$ is uniformly positive and  $M^-$ uniformly negative.}
\end{theorem}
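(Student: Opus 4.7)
My strategy is to derive both implications of Theorem \ref{th6} directly from Theorems \ref{th2} and \ref{th5}; no new analytic work should be required, and the argument reduces to verifying that the hypotheses of each implication translate into those of the earlier theorem it invokes.

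For the forward direction, I would apply Theorem \ref{th2}. The operator $L$ satisfies (\ref{eq12}) and (\ref{eq14}) by hypothesis, and property 5 of Proposition \ref{pro1} furnishes $i\mathbb{R}\subset\rho(L)$ as soon as $L$ is maximal uniformly $J$-dissipative. Theorem \ref{th2} then produces invariant subspaces $H^{\pm}$, maximal nonnegative and maximal nonpositive respectively, delivering the direct-sum decomposition $H=H^{+}+H^{-}$, the spectral inclusions $\sigma(\mp L|_{H^{\pm}})\subset\mathbb{C}^{\pm}$ (which are nothing other than $\overline{C^{\pm}}\subset\rho(L|_{H^{\pm}})$), and the analytic-semigroup generation of $\pm L|_{H^{\pm}}$. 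The last clause of Theorem \ref{th2}, activated precisely by uniform $J$-dissipativity, upgrades $H^{+}$ and $H^{-}$ to uniformly positive and uniformly negative subspaces, and thus the $M^{\pm}=H^{\pm}$ have all the claimed properties.

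For the converse, I would check the hypotheses of Theorem \ref{th5}. Uniform $J$-dissipativity gives $-\Re[Lu,u]\ge\delta\|u\|^{2}$ for some $\delta>0$, which trivially yields (\ref{eq18}) with $m=1/\delta$ (the $\|u\|_{F_{-1}}^{2}$ term is superfluous). The inclusion $i\mathbb{R}\subset\rho(L)$ follows from $\overline{C^{\pm}}\subset\rho(L|_{M^{\pm}})$ together with the direct decomposition: for $f=f^{+}+f^{-}\in M^{+}+M^{-}$, the element $(L|_{M^{+}}-i\omega I)^{-1}f^{+}+(L|_{M^{-}}-i\omega I)^{-1}f^{-}$ defines a bounded two-sided inverse of $L-i\omega I$ on $H$, the projections $P^{\pm}$ onto $M^{\pm}$ being automatically bounded since $M^{\pm}$ are closed. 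A maximal uniformly positive subspace with a complementary maximal uniformly negative invariant subspace whose direct sum is $H$ is automatically maximal nonnegative (a standard Krein-space fact, see \cite{azi}), and symmetrically for $M^{-}$. All hypotheses of Theorem \ref{th5} are then in place, and (\ref{eq14}) is part of its conclusion; the assertion that $M^{\pm}$ are uniformly definite is already in the hypotheses and needs no further verification.

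The only point where I foresee any bookkeeping is the implication $\overline{C^{\pm}}\subset\rho(L|_{M^{\pm}})\Rightarrow i\mathbb{R}\subset\rho(L)$, but this is purely formal once one notes that the skew projections onto $M^{\pm}$ are bounded and commute with $L$ in the sense dictated by the invariance of the $M^{\pm}$; the resolvent on $H$ is then the direct sum of the component resolvents. I do not anticipate any genuine obstacle beyond this verification.
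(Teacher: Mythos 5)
Your proposal is correct and follows essentially the same route as the paper, which states Theorem \ref{th6} precisely as a consequence of Theorems \ref{th2} (sufficiency, with the last clause giving uniform definiteness) and \ref{th5} (necessity, with (\ref{eq18}) supplied by uniform $J$-dissipativity). The only superfluous step is your hand-made construction of $(L-i\omega I)^{-1}$ in the converse direction: property 5 of Proposition \ref{pro1}, which you already invoke in the forward direction, gives $i\mathbb{R}\subset\rho(L)$ directly for a maximal uniformly $J$-dissipative operator.
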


Thus, the equality  {\rm (\ref{eq14})} is a necessary and sufficient
condition of existence of the subspaces in question.

\begin{remark}\label{rem1}
All theorems of this section can be strenthened in the sense that the
condition  $i{\mathbb R}\subset \rho(L)$ can be weakened. We can assume that
$i{\mathbb R}\subset \rho(L)$ except for finitely many isolated eigenvalues of finite multiplicity
of the operator $L$. Using the Riesz projections, we can reduce the problem to the already studied case
$i{\mathbb R}\subset \rho(L)$.
\end{remark}

We now present some corollaries in the case when an operator $L:H\to H$ ($H$ is a Krein space with the fundamental symmetry
$J=P^+-P^-$) is representable in the form  (\ref{eq01}). In this case,
the whole space  $H$ with an inner product  $(\cdot,\cdot)$ and the norm  $\|\cdot\|$
is identified with the Cartesian product   $H^+\times H^-$ ($H^{\pm}=R(P^{\pm})$) and the operator
 $L$ with a matrix operator  $L:H^+\times H^-\to H^+\times H^-$ of the form
\begin{equation}\label{eq011}
\begin{array}{c}
L=\left(\begin{array}{cc}A_{11} & A_{12}\\ A_{21} & A_{22}\end{array}\right),\ \vspace{8pt} \\
A_{11}=P^+LP^+,\ A_{12}=P^+LP^-,\
A_{21}=P^-LP^+ ,\ A_{22}=P^-LP^-.\end{array}
\end{equation}
In this case, the fundamental symmetry can be written in the form
$J=\left[\begin{array}{cc}I& 0\\ 0 & -I\end{array}\right]$. Let  $u=(u^+,u^-)\in H$, $v=(v^+,v^-)\in H$.
The inner product  $(u,v)$ in $H$ is written as
$(u^+,v^+)+(u^-,v^-)$ $(u^\pm,v^\pm\in H^\pm)$ and an idefinite inner product as  $(Ju,v)=(u^+,v^+)-(u^-,v^-)$.
Describe the conditions of Theorem \ref{th1} in this case.
Put $L_0=\left(\begin{array}{cc}A_{11} & 0\\ 0 & A_{22}\end{array}\right)$.

\begin{theorem}\label{th7}
{ Assume that $L:H\to H$ is an  $m$-$J$-dissipative operator
in a Krein space  $H=H^+ \times H^-$
such that   $i{\mathbb R}\in \rho(L)\cap \rho(L_0)$, the condition  {\rm (\ref{eq200})} holds,
$D(L)=D(L_0)$,  and there exists numbers $\lambda\in \rho(L)$, $\mu\in \rho(L_0)$ such that
the operator  $(L-\lambda I)^{-1}(L_0-\mu I): D(L_0)\to D(L)$ is extensible to an isomorphism
of $H$ onto $H$. Then there exist maximal nonnegative and maximal nonpositive
subspaces  $M^\pm$ invariant under $L$. The whole space  $H$ is representable as the direct sum
  $H  = M^+  + M^-$,
$\sigma(\mp L|_{M^\pm})\subset {\mathbb C}^\pm$, and the operators  $\pm L|_{M^\pm}$ are generators of analytic semigroups.}
\end{theorem}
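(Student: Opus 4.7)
The conclusion of Theorem~\ref{th7} coincides verbatim with that of Theorem~\ref{th1}, so my plan is to deduce Theorem~\ref{th7} by checking all of the hypotheses of Theorem~\ref{th1}. By assumption $L$ is $m$-$J$-dissipative, $i\mathbb{R}\subset \rho(L)$, and the resolvent estimate \eqref{eq200} holds; the only missing ingredient is the interpolation identity \eqref{eq14}. I would establish \eqref{eq14} first for the block-diagonal ``unperturbed'' operator $L_0$, where the computation is transparent, and then transfer the identity to $L$ via the hypothesis that $(L-\lambda I)^{-1}(L_0-\mu I)$ extends to an isomorphism of~$H$, which essentially matches the Sobolev towers of $L$ and $L_0$.

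For $L_0$: since $D(L)=D(L_0)=D(A_{11})\times D(A_{22})$, testing the $J$-dissipativity of $L$ against vectors of the form $(u^+,0)$ and $(0,u^-)$ shows that $A_{11}$ is dissipative in $H^+$ and $-A_{22}$ is dissipative in $H^-$. Combined with $i\mathbb{R}\subset \rho(L_0)=\rho(A_{11})\cap\rho(A_{22})$ and the openness of the resolvent set (so that each $\rho(A_{ii})$ contains points with strictly positive real part), this forces $A_{11}$ and $-A_{22}$ to be $m$-dissipative. Applying Proposition~\ref{pro2} to each block separately, and then using that interpolation commutes with finite Cartesian products, yields $(H_1^0,H_{-1}^0)_{1/2,2}=H$, where $H_1^0$ and $H_{-1}^0$ are the spaces built from $L_0$ in the same way $H_1,H_{-1}$ are built from $L$.

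To transfer the identity, denote $T=(L-\lambda I)^{-1}(L_0-\mu I)$. Substituting $w=(L_0-\mu I)u$, which ranges over all of $H$ because $\mu\in\rho(L_0)$, the hypothesis that $T$ extends to a bounded invertible operator on $H$ becomes the two-sided estimate $\|(L-\lambda I)^{-1}w\|\asymp\|(L_0-\mu I)^{-1}w\|$ on $H$; this says exactly that $H_{-1}$ and $H_{-1}^0$ are completions of $H$ in equivalent norms, so $H_{-1}=H_{-1}^0$ as Banach spaces. For the positive scale, $H_1$ and $H_1^0$ are Banach spaces on the common vector space $D(L)=D(L_0)$. Direct inspection of the definitions of $T$ and $T^{-1}=(L_0-\mu I)^{-1}(L-\lambda I)$ shows they both preserve $D(L_0)$, and then a closed-graph argument (each is bounded on $H$ and has closed graph on $H_1^0$) renders them bounded on $H_1^0$. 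Using the identity $(L-\lambda I)u=(L_0-\mu I)T^{-1}u$ on $D(L_0)$ produces the continuous inclusion $H_1^0\hookrightarrow H_1$, and the open mapping theorem upgrades this to $H_1=H_1^0$.

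With $H_1=H_1^0$ and $H_{-1}=H_{-1}^0$ in place, $(H_1,H_{-1})_{1/2,2}=(H_1^0,H_{-1}^0)_{1/2,2}=H$, so \eqref{eq14} holds and Theorem~\ref{th1} applies, giving the invariant subspaces $M^{\pm}$, the direct-sum decomposition, the spectral localization, and generation of analytic semigroups. The main obstacle I foresee is the transfer step, specifically verifying that $T$ and $T^{-1}$ genuinely leave $D(L_0)$ invariant (so that the closed-graph reasoning on $H_1^0$ is legitimate), and ensuring that the choices of $\lambda\in\rho(L)$ and $\mu\in\rho(L_0)$ used in defining $H_{\pm 1}$ and $H_{\pm 1}^0$ yield equivalent norms independently of the parameter, so that the identifications $H_1=H_1^0$ and $H_{-1}=H_{-1}^0$ are unambiguous.
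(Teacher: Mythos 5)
Your proposal follows essentially the same route as the paper: reduce to Theorem \ref{th1} by verifying the interpolation identity (\ref{eq14}), obtain it blockwise for $L_0$ via Proposition \ref{pro2} (using that $A_{11}$ and $-A_{22}$ are $m$-dissipative) together with the fact that real interpolation commutes with Cartesian products (Theorem 1.17.1 in \cite{tr01}), and transfer it to $L$ by noting that the isomorphism hypothesis makes the $H_{-1}$-norms of $L$ and $L_0$ equivalent while $D(L)=D(L_0)$ identifies the $H_1$-spaces. Your extra care about the closed-graph/norm-equivalence details and the parameter-independence of the norms (which the paper covers via the resolvent identity (\ref{eq11})) is sound and only makes explicit what the paper leaves implicit.
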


\begin{proof}
Note that the condition of the theorem implies that the operators
 $A_{11}:H^+\to H^+$ and $-A_{22}:H^-\to H^-$ are $m$-dissipative.
To refer to Theorem  \ref{th1}, it suffices to prove that
the interpolation equality  (\ref{eq14}) holds. By condition,
$D(L)=D(L_0)=H_1=H_1^+\times H_1^-$, where  $H_1^+=D(A_{11})$ and $H_1^-=D(A_{22})$.
The last condition of the theorem implies the existence of constants
 $c_1,c_2>0$ such that
$$
c_1\|(L-\lambda I)^{-1}u\|\leq \|(L_0-\mu I)^{-1}u\|\leq c_2 \|(L-\lambda I)^{-1}u\|\ \ \forall u\in H
$$
and thus the space  $H_{-1}$ which is a completion of $H$ with respect to the norm $\|L-\lambda I)^{-1}u\|$ coincides with
the same space for the operator  $L_0$, i.~e., with the space  $H_{-1}^+\times H_{-1}^-$, where
 $H_{-1}^\pm$ are completions of  $H^\pm$ with respect to the norms
 $\|(A_{11}-\mu I)^{-1}u^+\|$ and $\|(A_{22}-\mu I)^{-1}u^-\|$.
Proposition  2 implies that  $(H_1^+,H_{-1}^+)_{1/2,2}=H^+$ and
$(H_1^-,H_{-1}^-)_{1/2,2}=H^-$. Applying Theorem  1.17.1 of \cite{tr01} yields
 $(H_1,H_{-1})_{1/2,2}=(H_1^+,H_{-1}^+)_{1/2,2}\times (H_1^-,H_{-1}^-)_{1/2,2}$ and thereby
$(H_1,H_{-1})_{1/2,2}=H^+ \times H^-=H$.
\end{proof}

In the next theorem we assume that
 $A_{11},-A_{22}$ are $m$-dissipative operators satisfying the conditions
\begin{equation}\label{la1}\begin{array}{c}
\exists c>0:\ |(A_{11}u^+,v^+)|\leq c\|u^+\|_{F_1^+}\|v^+\|_{F^+_1},  \\
|(A_{22}u^-,v^-)|\leq c\|u^-\|_{F_1^-}\|v^-\|_{F^-_1}\end{array}
\end{equation}
for all  $u^\pm,v^\pm\in H_1^\pm$,
where $H_1^+=D(A_{11})$, $H_1^-=D(A_{22})$, and
$$
\|u^+\|_{F_1^+}^2=\Re (-A_{11}u^+,u^+)+ \|u^+\|^2,\ \
\|u^-\|_{F_1^-}^2=\Re (A_{22}u^-,u^-)+ \|u^-\|^2.
$$
We denote by $F_1^\pm$ the completions of
 $D(A_{11})$ and $D(A_{22})$ with respect to the norms  $\|\cdot\|_{F_1^+}$ and $\|\cdot\|_{F_1^-}$,
respectively. By $F_{-1}^\pm$, we mean the negative spaces constructed on the pairs
$F_1^\pm,H^\pm$.
We also assume that the operators  $A_{12}$ and $A_{21}$ are subordinate to the operators
 $A_{11}$ and $A_{22}$ in the following sense:
$D(A_{22})\subset D(A_{12})$, $D(A_{11})\subset D(A_{21})$, and
\begin{equation}\label{la2}
\exists c>0:\ \|A_{12}u^-\|_{F_{-1}^+}\leq c\|u^-\|_{F_1^-},\  \
\|A_{21}u^+\|_{F_{-1}^-}\leq c\|u^+\|_{F_1^+}\ \forall u^\pm\in H_1^\pm;
\end{equation}
\begin{equation}\label{la3}
\exists c_0>0:\  \|u^+\|_{F_1^+}^2+ \|u^-\|_{F_1^-}^2\leq c_0
(\Re(-L\vec{u},\vec{u})+\|\vec{u}\|^2)
\end{equation}
for all  $ \vec{u}=(u^+,u^-)\in H_1=H_1^+\times H_1^-.$

\begin{theorem}\label{th8}
{ Let $L:H\to H$ be an $m$-$J$-dissipative operator such that
 $i{\mathbb R} \subset \rho(L)$, and the conditions  {\rm (\ref{la1})-(\ref{la3})} hold.
Then there exist maximal nonnegative and maximal nonpositive
subspaces  $M^\pm$ invariant under $L$. The whole space  $H$ is representable as the direct sum
  $H  = M^+  + M^-$,
$\sigma(\mp L|_{M^\pm})\subset {\mathbb C}^\pm$, and the operators  $\pm L|_{M^\pm}$ are generators of analytic semigroups.
If in addition $L$ is strictly $J$-dissipative or uniformly $J$-diisipative then the corresponding spaces $M^+$ and $M^-$
can be chosen to be positive and negative or uniformly positive and uniformly negative, respectively.}
\end{theorem}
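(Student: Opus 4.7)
The plan is to reduce to Theorem~\ref{th2}, since $L$ is already $m$-$J$-dissipative and $i\RR\subset\rho(L)$ by hypothesis. Thus only the boundedness condition (\ref{eq12}) and the interpolation equality (\ref{eq14}) remain to be verified; the conclusions about strictly or uniformly $J$-dissipative $L$ will then come directly from the corresponding statements in Theorem~\ref{th2}.

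Condition (\ref{eq12}) is the easier half. Expanding $[L\vec u,\vec v]$ in block form produces four terms. The diagonal terms $(A_{11}u^+,v^+)$ and $(A_{22}u^-,v^-)$ are controlled by (\ref{la1}); the off-diagonal terms $(A_{12}u^-,v^+)$ and $(A_{21}u^+,v^-)$ are controlled via the $F_{-1}^\pm$--$F_1^\pm$ duality of (\ref{la2}). Summing with Cauchy--Schwarz on the Hilbert product $F_1^+\times F_1^-$ and then applying (\ref{la3}) to dominate $\|u^+\|_{F_1^+}^2+\|u^-\|_{F_1^-}^2$ by $\|\vec u\|_{F_1}^2$ yields (\ref{eq12}).

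The heart of the argument is (\ref{eq14}). By Lemma~\ref{lem1} this is equivalent to $(F_1,F_{-1})_{1/2,2}=H$, and I would establish it by identifying both $F_{\pm 1}$ as Cartesian products. For $F_1$, direct expansion produces
\[
\|\vec u\|_{F_1}^2=\|u^+\|_{F_1^+}^2+\|u^-\|_{F_1^-}^2+\Re\bigl[-(A_{12}u^-,u^+)+(A_{21}u^+,u^-)\bigr];
\]
the cross terms are bounded above by (\ref{la2}) and Cauchy--Schwarz, while the necessary lower bound is exactly (\ref{la3}), so $F_1\simeq F_1^+\times F_1^-$ as Banach spaces. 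For $F_{-1}$, the relation $[\vec u,\vec v]=(u^+,v^+)-(u^-,v^-)$ combined with the product structure of $F_1$ gives $\|\vec u\|_{F_{-1}}\le c(\|u^+\|_{F_{-1}^+}^2+\|u^-\|_{F_{-1}^-}^2)^{1/2}$, and testing against $\vec v=(v^+,0)$ and $(0,v^-)$ yields the matching reverse estimates, so $F_{-1}\simeq F_{-1}^+\times F_{-1}^-$. Theorem~1.17.1 of \cite{tr01} then gives
\[
(F_1,F_{-1})_{1/2,2}=(F_1^+,F_{-1}^+)_{1/2,2}\times(F_1^-,F_{-1}^-)_{1/2,2},
\]
and each factor equals $H^\pm$ by applying Proposition~\ref{pro2} together with Lemma~\ref{lem1} to the $m$-dissipative operators $A_{11}$ on $H^+$ and $-A_{22}$ on $H^-$, whose (\ref{eq12})-type hypothesis is exactly (\ref{la1}).

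The main obstacle is the dual identification $F_{-1}\simeq F_{-1}^+\times F_{-1}^-$: the supremum-norm defining $F_{-1}$ must be shown equivalent to the one obtained by restricting test vectors to the two components separately, and this step depends critically on having first secured the product structure of $F_1$ so that slices of the form $(v^+,0)$ and $(0,v^-)$ actually sit inside $F_1$ with controlled norm. Once both product decompositions are in hand, the verification of (\ref{eq14}) is complete and Theorem~\ref{th2} delivers everything: the maximal nonnegative and nonpositive invariant subspaces, the direct decomposition $H=M^++M^-$, the spectral localization $\sigma(\mp L|_{M^\pm})\subset\CC^\pm$, the analytic semigroup generation, and the positive/negative and uniform refinements in the strict and uniform $J$-dissipative cases.
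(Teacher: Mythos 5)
Your proposal is correct and follows essentially the same route as the paper: use (\ref{la1})--(\ref{la3}) to identify $F_1$ and $F_{-1}$ with the products $F_1^+\times F_1^-$ and $F_{-1}^+\times F_{-1}^-$, interpolate componentwise via Theorem 1.17.1 of \cite{tr01} to get $(F_1,F_{-1})_{1/2,2}=H$ (hence (\ref{eq14}) by Lemma \ref{lem1}), and then invoke Theorem \ref{th2}. The only cosmetic difference is that the paper identifies each factor $(F_1^\pm,F_{-1}^\pm)_{1/2,2}=H^\pm$ directly from Proposition \ref{pro3} (since $F_{-1}^\pm$ are by definition the negative spaces on the pairs $F_1^\pm,H^\pm$), whereas you reach the same equalities through Proposition \ref{pro2} and Lemma \ref{lem1} applied to $A_{11}$ and $-A_{22}$; your explicit verification of (\ref{eq12}) is a detail the paper leaves implicit.
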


\begin{proof}
The condition  (\ref{la3}) implies that the norm in the space $F_1$ is equivalent to  the norm
$\|u\|_{F_1}^2=\|u^+\|_{F_1^+}^2+\|u^-\|_{F_1^-}^2$ $(u=(u^+,u^-))$.
The definition of the space $F_{-1}$ and the indefinite inner product in
$H$ ensures that  $F_{-1}=F_{-1}^+\times F_{-1}^-$. Proposition  \ref{pro3} and Theorem
1.17.1 in \cite{tr01} yields
$(F_1,F_{-1})_{1/2,2}=(F_1^+,F_{-1}^+)_{1/2,2}\times (F_1^-,F_{-1}^-)_{1/2,2}
=H^+ \times H^-=H$.  The claim results from Theorem \ref{th2}.
\end{proof}

\end{document}